\let\TeXchi\chi
\newbox\chibox
\chibox \hbox{\raise\dp0 \box 0 }
\def\chi{\copy\chibox}
\newcommand{\dx}{\mathrm{d}x}
\newcommand{\dt}{\mathrm{d}t}
\newcommand{\ds}{\mathrm{d}s}
\renewcommand{\epsilon}{\varepsilon}
\renewcommand{\rho}{\varrho}
\author[N. Liao]{Naian Liao}
\address{Naian Liao\\
Fachbereich Mathematik, Universit\"at Salzburg\\
Hellbrunner Str. 34, 5020 Salz\-burg, Austria}
\email{naian.liao@sbg.ac.at}
\author[L. Sch\"atzler]{Leah Sch\"atzler}
\address{Leah Sch\"atzler\\
Fachbereich Mathematik, Universit\"at Salzburg\\
Hellbrunner Str. 34, 5020 Salzburg, Austria}
\email{leahanna.schaetzler@sbg.ac.at}
\keywords{Doubly nonlinear parabolic equations, signed solutions, intrinsic scaling,
expansion of positivity, H\"older continuity}
\subjclass[2010]{35K65, 35K67, 35B65}
\begin{document}
\newtheorem{proposition}{Proposition}[section]
\newtheorem{theorem}{Theorem}[section]
\newtheorem{lemma}{Lemma}[section]
\newtheorem{corollary}{Corollary}[section]
\newtheorem{remark}{Remark}[section]
\renewcommand{\thesection}{\arabic{section}}
\renewcommand{\theequation}{\thesection.\arabic{equation}}
\renewcommand{\thetheorem}{\thesection.\arabic{theorem}}
\numberwithin{equation}{section}
\numberwithin{theorem}{section}
\numberwithin{proposition}{section}
\numberwithin{lemma}{section}
\numberwithin{remark}{section}
\setcounter{secnumdepth}{3}
\newcommand{\cl}{\centerline}
\newcommand{\sms}{\smallskip}
\newcommand{\ms}{\medskip}
\newcommand{\bs}{\bigskip}
\newcommand{\noi}{\noindent}
\newcommand{\itl}[1]{\textit{#1}}
\newcommand{\blf}[1]{\textbf{#1}}
\newcommand{\dsty}{\displaystyle}
\newcommand{\txty}{\textstyle}
\newcommand{\ssty}{\scriptstyle}
\newcommand{\tty}{\texttt}


\newcommand\Par{\mathhexbox278\,}


\newcommand{\al}{\alpha}
\newcommand{\Al}{\Alpha}
\newcommand{\be}{\beta}
\newcommand{\Be}{\Beta}
\newcommand{\Gm}{\Gamma}
\newcommand{\gm}{\gamma}
\newcommand{\dl}{\delta}
\newcommand{\Dl}{\Delta}
\newcommand{\lm}{\lambda}
\newcommand{\Lm}{\Lambda}
\newcommand{\kp}{\kappa}
\newcommand{\varep}{\varepsilon}
\newcommand{\eps}{\epsilon}
\newcommand{\vp}{\varphi}
\newcommand{\sig}{\sigma}
\newcommand{\Sig}{\Sigma}
\newcommand{\om}{\omega}
\newcommand{\Om}{\Omega}
\newcommand{\uom}{\mbox{\boldmath$\omega$}}
\newcommand{\btau}{\mbox{\boldmath$\tau$}}
\newcommand{\bnu}{\mbox{\boldmath$\nu$}}
\newcommand{\up}{\upsilon}
\newcommand{\z}{\zeta}


\newcommand{\df}[1]{\buildrel\mbox{\small def}\over{#1}}
\newcommand{\op}[1]{\buildrel\mbox{\tiny o}\over{#1}}
\newcommand{\db}{\prime\prime}
\newcommand{\bsl}{\backslash}
\newcommand{\lb}{\lbrack\!\lbrack}
\newcommand{\rb}{\rbrack\!\rbrack}
\newcommand\la{\langle}
\newcommand\ra{\rangle}
\newcommand{\ev}{\equiv}
\newcommand{\nev}{\not\equiv}
\newcommand{\nn}{\mathbb{N}}
\newcommand{\qq}{\mathbb{Q}}
\newcommand{\zz}{\mathbb{Z}}
\newcommand{\rr}{\mathbb{R}}
\newcommand{\rn}{\rr^N}
\newcommand{\cc}{\mathbb{C}}
\newcommand{\id}{\mathbb{I}}
\newcommand{\bo}{\mathbb{O}}

\newcommand{\amsb}[1]{\mathbb{#1}}
\newcommand{\mcl}[1]{\mathcal{#1}}
\newcommand{\bl}[1]{\mathbf{#1}}
\newcommand{\ov}[1]{\overline{#1}}
\newcommand{\wt}[1]{\widetilde{#1}}
\newcommand{\wh}[1]{\widehat{#1}}

\newcommand{\llra}{\leftrightarrow}
\newcommand{\lra}{\longrightarrow}
\newcommand{\LLR}{\Longleftrightarrow}
\newcommand{\LRA}{\Longrightarrow}
\newcommand{\LLA}{\Longleftarrow}


\newcommand{\bbox}{\vrule height.6em width.6em 
depth0em} 
\newcommand{\os}{\vbox{\hrule \hbox{\vrule 
height.6em depth0pt 
\hskip.6em \vrule height.6em depth0em}
\hrule}} 


\newcommand{\dvg}{\operatorname{div}}
\newcommand{\curl}{\operatorname{curl}}
\newcommand{\supp}{\operatorname{supp}}
\newcommand{\essup}{\operatornamewithlimits{ess\,sup}}
\newcommand{\essinf}{\operatornamewithlimits{ess\,inf}}
\newcommand{\essosc}{\operatornamewithlimits{ess\,osc}}
\newcommand{\osc}{\operatornamewithlimits{osc}}
\newcommand{\sign}{\operatorname{sign}}
\newcommand{\loc}{\operatorname{loc}}
\newcommand{\diam}{\operatorname{diam}}
\newcommand{\dist}{\operatorname{dist}}
\newcommand{\card}{\operatorname{card}}
\newcommand{\meas}{\operatorname{meas}}
\newcommand{\spn}{\operatorname{span}}
\newcommand{\dtm}{\operatorname{det}}
%


\newcommand{\overlim}{\mathop{\overline{\lim}}\limits}
\newcommand{\underlim}{\mathop{\underline{\lim}}\limits}
\newcommand{\ttop}[2]{\genfrac{}{}{0pt}{}{#1}{#2}}
\newcommand{\bcu}{\mathop{\txty{\bigcup}}\limits}
\newcommand{\bca}{\mathop{\txty{\bigcap}}\limits}
\newcommand{\bsu}{\mathop{\txty{\sum}}\limits}
\newcommand{\pro}{\mathop{\txty{\prod}}\limits}


\newcommand{\pl}{\partial}
\newcommand{\ptt}{\frac{\pl}{\pl t}}
\newcommand{\ppx}{\frac\pl{\pl x}}
\newcommand{\dds}{\frac d{ds}}
\newcommand{\ddt}{\frac d{dt}}

\newcommand{\intl}{\int\limits}
\newcommand{\iintl}{\iint\limits}
\def\Xint#1{\mathchoice
    {\XXint\displaystyle\textstyle{#1}}%
    {\XXint\textstyle\scriptstyle{#1}}%
    {\XXint\scriptstyle\scriptscriptstyle{#1}}%
    {\XXint\scriptscriptstyle\scriptscriptstyle{#1}}%
    \!\int}
\def\XXint#1#2#3{\setbox0=\hbox{$#1{#2#3}{\int}$}
    \vcenter{\hbox{$#2#3$}}\kern-0.5\wd0}
\def\bint{\Xint-}
\def\dashint{\Xint{\raise4pt\hbox to7pt{\hrulefill}}}
\def\dashiint{\bint\kern-0.15cm\bint}

\newcommand{\ovl}[3]{\int_{#1}^{#2}\kern-#3pt\raise4pt\hbox to7pt{\hrulefill}\ }

\newcommand{\ovll}[3]{\intl_{#1}^{#2}\kern-#3pt\raise4pt\hbox to7pt{\hrulefill}\ }

\newcommand{\tvl}[2]{\iint_{#1}\kern-#2pt\raise4pt\hbox to7pt{\hrulefill}\ }



\newcommand{\omt}{\Om_T}
\newcommand{\plo}{\partial\Omega}
\newcommand{\ovo}{\bar{\Om} }

%
\newcommand{\ci}[1]{C^\infty\!\left({#1}\right)}
\newcommand{\cio}[1]{C_o^\infty\!\left({#1}\right)}
\newcommand{\lloc}[1]{L_{\loc}\!\left({#1}\right)}
\newcommand{\xy}{|x-y|}


\newcommand{\intom}{\intl_{\Om}}
\newcommand{\intbo}{\intl_{\plo}}
\newcommand{\inom}{\int_{\Om}}
\newcommand{\inbo}{\int_{\plo}}
\newcommand{\intrn}{\intl_{\rn}}


\newcommand{\bye}{
\title[H\"older regularity for a doubly nonlinear equation]{On the H\"older regularity of signed solutions to a doubly nonlinear equation. Part III}

\date{}
\begin{abstract}
We establish the local H\"older continuity of possibly sign-changing solutions to
a class of doubly nonlinear parabolic equations whose prototype is
\[
\partial_t\big(|u|^{q-1}u\big)-\Dl_p u=0,\quad 1<p<2,\quad 0<p-1<q.
\]
The proof exploits the space expansion of positivity for the singular, parabolic $p$-Laplacian
and employs the method of intrinsic scaling by carefully balancing the double singularity.
\vskip.2truecm
\end{abstract}
\maketitle


\section{Introduction and Main Results}
This is the third paper in a project devoted to integrating
the fragmented theory of H\"older regularity of weak solutions to
a class of doubly nonlinear parabolic equations, whose model case is
\begin{equation*}
	\partial_t\big(|u|^{q-1}u\big)- \dvg (|Du|^{p-2}Du) =0\quad\mbox{ weakly in $ E_T$.}
\end{equation*}
Here $E_T:=E\times(0,T]$ for some $T>0$  and some $E$ open in $\rn$. 
Indeed, we have studied the borderline case, i.e., $p>1$ and $q=p-1$ in \cite{BDL}; 
whereas the doubly degenerate case $p>2$ and $0<q<p-1$ has been treated in \cite{BDLS-1}.
In this note, we will push the front line forward by taking on the doubly singular case $1<p<2$ and $0<p-1<q$.
 
Our approach to the H\"older regularity is entirely local and structural.
In fact, we shall consider  parabolic partial differential equations of the general form
\begin{equation}  \label{Eq:1:1}
	\partial_t\big(|u|^{q-1}u\big)-\dvg\bl{A}(x,t,u, Du) = 0\quad \mbox{ weakly in $ E_T$}
\end{equation}
where the function $\bl{A}(x,t,u,\z)\colon E_T\times\rr^{N+1}\to\rn$ is only assumed to be
measurable with respect to $(x, t) \in E_T$ for all $(u,\z)\in \rr\times\rn$,
continuous with respect to $(u,\z)$ for a.e.~$(x,t)\in E_T$,
and subject to the structure conditions
\begin{equation}\label{Eq:1:2p}
	\left\{
	\begin{array}{c}
		\bl{A}(x,t,u,\z)\cdot \z\ge C_o|\z|^p \\[5pt]
		|\bl{A}(x,t,u,\z)|\le C_1|\z|^{p-1}%
	\end{array}
	\right .
	\qquad \mbox{for a.e.~$(x,t)\in E_T$, $\forall\,u\in\rr$, $\forall\,\z\in\rn$,}
\end{equation}
where $C_o$ and $C_1$ are given positive constants.

In the sequel, we will refer to the set of parameters
$\{p,\,q,\,N,\,C_o,\,C_1\}$ as  the structural {\it data}.
We also write $\boldsymbol \gm$ as a generic positive constant that can be quantitatively
determined a priori only in terms of the data and that can change from line to line.

Let $u\in L^{\infty}(E_T)$ and set $M:=\|u\|_{\infty, E_T}$. Define $\Gm:=\pl E_T-\overline{E}\times\{T\}$
to be the parabolic boundary of $E_T$, and for a compact set $\mathcal{K}\subset E_T$
introduce the intrinsic, parabolic distance from $\mathcal{K}$ to $\Gm$ by
\begin{equation*}
	\begin{aligned}
		\dist(\mathcal{K};\,\Gm)&:=\inf_{\substack{(x,t)\in \mathcal{K}\\(y,s)\in\Gm}}
		\left\{|x-y|+M^{\frac{p-q-1}p}|t-s|^{\frac1p}\right\}.
	\end{aligned}
\end{equation*}

Postponing the formal definitions of weak solution,
now we state the main result concerning the interior H\"older continuity of weak solutions
to \eqref{Eq:1:1}, subject to the structure conditions \eqref{Eq:1:2p} for $1<p<2$ and $0<p-1<q$.
\begin{theorem}\label{Thm:1:1}
	Let $u$ be a bounded, local, weak solution to \eqref{Eq:1:1} -- \eqref{Eq:1:2p} in $E_T$.
	Then $u$ is locally H\"older continuous in $E_T$. More precisely,
	there exist constants $\boldsymbol\gm>1$ and $\be\in(0,1)$ that can be determined a priori only in terms of the data, such that for every compact set $\mathcal{K}\subset E_T$,
	\begin{equation*}
	\big|u(x_1,t_1)-u(x_2,t_2)\big|
	\le
	\boldsymbol \gm M
	\left(\frac{|x_1-x_2|+M^{\frac{p-q-1}p}|t_1-t_2|^{\frac1p}}{\dist(\mathcal{K};\Gm)}\right)^{\be},
	\end{equation*}
for every pair of points $(x_1,t_1), (x_2,t_2)\in \mathcal{K}$.
\end{theorem}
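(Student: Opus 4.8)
The plan is to deduce Theorem~\ref{Thm:1:1} from a quantitative decay of the oscillation of $u$ along a nested family of intrinsically scaled cylinders, following the method of intrinsic scaling. Fix a point $(x_o,t_o)\in E_T$ and a reference cylinder contained in $E_T$, and let $\omega$ be an upper bound for the oscillation of $u$ on it. The first task is to construct cylinders $Q_n=K_{\rho_n}(x_o)\times(t_o-\theta_n\rho_n^{\,p},t_o]$ whose time-length carries a factor $\theta_n$ balancing the $(p-1)$-homogeneity of the spatial operator against the degeneracy of $t\mapsto|u|^{q-1}u$; since $0<p-1<q$, the relevant scaling exponent is $q+1-p>0$ (equivalently $-(p-q-1)$), so that $\theta_n\rho_n^{\,p}$ carries, relative to $\rho_n^{\,p}$, a factor behaving like the $(q+1-p)$-th power of the oscillation — consistent with the weight $M^{(p-q-1)/p}$ in $\operatorname{dist}(\mathcal K;\Gamma)$, and degenerating to the constant scaling of the borderline case $q=p-1$. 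One must check from the outset that this family is self-improving, i.e.\ that the cylinder determined by $\omega_n$ at stage $n$ genuinely contains the sub-cylinders used in all later estimates at every scale; this book-keeping is the first place where the doubly singular balance is really used.

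On these cylinders I would set up the standard machinery. (i) \textbf{Energy (Caccioppoli) estimates}: test the weak formulation with truncations $\pm(u-k)_\pm\zeta^p$, treating $\partial_t(|u|^{q-1}u)$ via a time mollification or Steklov averaging, and absorbing the time term through the auxiliary function $\mathfrak g_\pm(u,k):=\pm\int_k^u\big(|s|^{q-1}s-|k|^{q-1}k\big)_\pm\,\mathrm ds$, which one shows is comparable, up to data-dependent constants, to $(u-k)_\pm^2$ times a weight $\mathfrak w\approx\max\{|k|,(u-k)_\pm\}^{q-1}$. Keeping $\mathfrak w$ under control, in particular when the truncation level $k$ is close to $0$ — which, for sign-changing $u$, cannot be avoided — forces a case distinction according to whether $0$ lies in the interior of the range $[\mu^-,\mu^+]$ of $u$ on $Q_n$, and this is the second and more serious technical difficulty. (ii) \textbf{Logarithmic estimates}, used to propagate measure information forward in time. (iii) A \textbf{De~Giorgi-type lemma}: via the parabolic Sobolev embedding, if the portion of $Q_n$ on which $u$ is far from one endpoint of its range is small enough, then the oscillation is strictly reduced on a smaller cylinder.

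The core of the argument is then the usual \emph{alternative}. If, in a measure-theoretic sense, $u$ stays away from one endpoint of its range on a large part of the reference cylinder, one uses the energy and logarithmic estimates to shrink this measure below the threshold of the De~Giorgi lemma, hence reducing the oscillation. In the complementary case one invokes the \textbf{expansion of positivity} for the singular parabolic $p$-Laplacian established in the subsequent sections — specifically its \emph{space} form, since for $1<p<2$ a lower bound known on a single time-slice spreads over an enlarged ball within a short time interval — to upgrade a pointwise lower (or upper) bound from a small set to a full sub-cylinder, again producing a definite reduction of oscillation. In either case one arrives at
\[
\operatorname*{osc}_{Q_{n+1}}u\le\eta\,\operatorname*{osc}_{Q_n}u,\qquad \eta=\eta(\mathrm{data})\in(0,1),\quad \rho_{n+1}=\lambda\rho_n,\ \lambda=\lambda(\mathrm{data})\in(0,1).
\]

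Finally I would run the standard geometric iteration: the recursion above together with the definition of the $\theta_n$ yields, for every $r\le\rho_0$, an estimate of the form $\operatorname{osc}_{Q(r)}u\le\boldsymbol\gamma\,\omega\,(r/\rho_0)^{\beta}$ with $\beta=\beta(\mathrm{data})\in(0,1)$; unravelling the intrinsic time-scaling converts this into a joint modulus of continuity with respect to the metric $|x_1-x_2|+M^{(p-q-1)/p}|t_1-t_2|^{1/p}$. A covering and chaining argument over the compact set $\mathcal K$, taking $\rho_0$ proportional to $\operatorname{dist}(\mathcal K;\Gamma)$ and $\omega\le 2M$, then delivers exactly the stated inequality. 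I expect the main obstacle not to be any single estimate but the internal consistency of the whole scheme — choosing $\theta_n$, the truncation levels, and all the constants so that the intrinsic cylinders remain nested and admissible at every scale while the weight $\mathfrak w$ in the time term stays controlled near the zeros of $u$ — which is precisely the point at which the hypotheses $0<p-1<q$ and $u\in L^\infty$ must be exploited together.
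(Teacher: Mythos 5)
Your skeleton --- intrinsic cylinders with time-scaling $\theta\approx\boldsymbol\omega^{q+1-p}$, Caccioppoli estimates with $\mathfrak g_\pm$, a De~Giorgi lemma, space expansion of positivity for singular $p$-Laplacian, geometric iteration, covering --- matches the paper's framework. But two points of your scheme diverge from what the paper actually does, and the second one is a genuine gap.

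First, a minor structural divergence: you propose logarithmic estimates to propagate measure information forward in time (your step~(ii)). The paper deliberately avoids these, and advertises this as a feature: time-propagation of measure (Lemma~\ref{Lm:3:1}) and measure-shrinking (Lemma~\ref{Lm:3:2}) are both obtained purely from the energy estimate of Proposition~\ref{Prop:2:1}, following \cite[Lemma~4.1]{BDL}. Your route is presumably viable but is not what the authors do, and it is more machinery than is needed.

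Second, and more seriously, your iteration scheme does not close as stated. You assume that at \emph{every} stage one of the two alternatives delivers a uniform reduction $\essosc_{Q_{n+1}}u\le\eta\,\essosc_{Q_n}u$ with $\eta$ depending only on the data. That is only true as long as $u$ stays quantitatively close to zero, i.e.\ $\boldsymbol\mu_n^-\le\xi\boldsymbol\omega_n$ and $\boldsymbol\mu_n^+\ge-\xi\boldsymbol\omega_n$. The expansion-of-positivity lemma (Lemma~\ref{Lm:expansion:p}) carries constants depending on the ratio bounds $c\boldsymbol\omega\le\pm\boldsymbol\mu^\pm\le\Lambda\boldsymbol\omega$; as the iteration proceeds and the solution drifts away from zero, $|\boldsymbol\mu_n^\pm|/\boldsymbol\omega_n$ can grow without bound, destroying any data-only $\eta$. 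The paper handles this by a dichotomy you do not mention: as long as the ``near zero'' condition \eqref{Eq:Hp-main}$_1$ holds, the reduction \eqref{Eq:reduc-osc-int-4} applies; at the first index $j$ where it fails (\eqref{Eq:Hp-main}$_2$), the iteration is \emph{terminated}, the equation is rescaled by $v=u/\boldsymbol\mu^-_j$ and changed to $w=v^q$ so that it satisfies a genuine parabolic $p$-Laplacian-type structure \eqref{eq:pLaplace structure} with $\boldsymbol\mu^-$-independent constants, and the known oscillation decay of Chen--DiBenedetto (Proposition~\ref{Prop:5:1}) is imported wholesale. The final H\"older exponent is then obtained by splicing the finite ``near zero'' chain \eqref{Eq:case1} with the imported $p$-Laplacian decay \eqref{Eq:case2}, which requires the careful cylinder book-keeping in the last subsection. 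Without this branch your iteration neither self-improves nor terminates, so the proposed proof has a hole precisely where the doubly singular structure bites.

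A smaller caveat: your $\mathfrak g_\pm(u,k)=\pm\int_k^u(|s|^{q-1}s-|k|^{q-1}k)_\pm\,\ds$ is not the same as the paper's $\mathfrak g_\pm(w,k)=\pm q\int_k^w|s|^{q-1}(s-k)_\pm\,\ds$; they are comparable, but make sure your estimates are stated for the quantity that actually appears in the energy inequality you derive.
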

\begin{remark}\label{Rmk:1:1}\upshape
Local boundedness of weak solutions suffices in Theorem~\ref{Thm:1:1}.
Moreover, our arguments also apply to equations with lower order terms.
Modifications can be modeled on the arguments in \cite[Chapters~II -- IV]{DB}
and in \cite[Appendix~C]{DBGV-mono}.
Nevertheless, we will not pursue such generality here but concentrate on the actual novelty.
\end{remark}
\begin{remark}\upshape
Theorem~\ref{Thm:1:1} yields a Liouville type theorem by an argument similar to \cite[Corollary~1.1]{BDL}, 
which we refer to for details.
\end{remark}

\subsection{Novelty and Significance}\label{S:NS}
The issue of H\"older regularity for the doubly singular equation \eqref{Eq:1:1} with the general structure \eqref{Eq:1:2p}
in the case $1<p<2$ and $0<p-1<q$ (see Figure~\ref{Fig:range}),
has been considered by a number of authors, cf.~\cite{ChenZhou95, Henriques-20, Ivanov-2, Vespri-Vestberg}. 
However, the theory is fragmented
in the sense that either they further restrict the range of $p$ and $q$, or they consider non-negative solutions only.
Moreover, different notions of solution could have been used.

\begin{figure}[h]\label{Fig:range}
\centering
\includegraphics[scale=0.8]{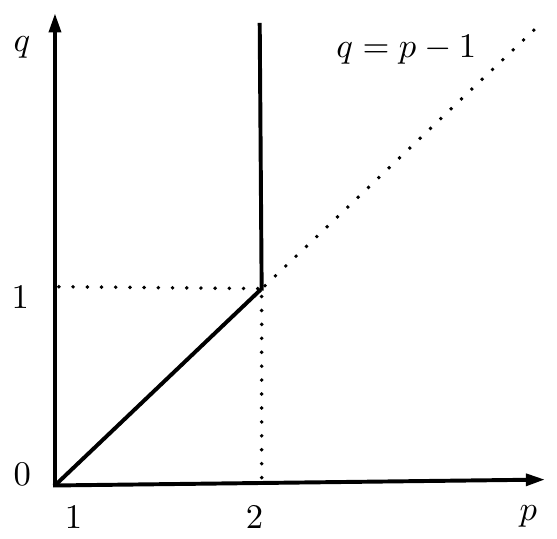}
\caption{Range of $p$ and $q$ }
\end{figure}

Our contribution to the existing literature lies in that a unified approach to the H\"older regularity is provided 
in the doubly singular range, that is, $1<p<2$ and $0<p-1<q$,
and sign-changing solutions are dealt with.

The key observation is that once given the space expansion of positivity for
the singular, parabolic $p$-Laplacian, the proof can be achieved with other components 
that rely solely on the energy estimate for the doubly singular equation.
Even though we follow in spirit the intrinsic scaling argument of \cite{ChenDB92} in the context of singular, parabolic $p$-Laplacian,
more careful analysis has to be carried out in order to balance the double singularity exhibited by \eqref{Eq:1:1}.
Therefore, a challenge here is the delicate, technical nature of the intrinsic scaling argument.
Furthermore, it is worth pointing out that, like in our previous works \cite{BDL, BDLS-1}, 
we dispense with any kind of logarithmic type energy estimate, cf.~\cite[p.~28]{DB}.

Our argument can also be adapted to the borderline cases. In particular,
when $q=1$ and $1<p<2$, it recovers the proof in \cite{ChenDB92};
when $q>1$ and $p=2$, it deals with the fast diffusion equation;
when $q=p-1$, we return to our first work \cite{BDL} (see also \cite{Trud2} for non-negative solutions).

The interior arguments do not apply to the boundary regularity directly except at the initial level,
cf.~\cite[Theorem~1.2, Remark~1.3]{BDLS-1}.
We leave this issue to a future investigation.

\subsection{Notion of Solution}\label{S:1:2}
A function
\begin{equation}  \label{Eq:1:3p}
	u\in C\big(0,T;L^{q+1}_{\loc}(E)\big)\cap L^p_{\loc}\big(0,T; W^{1,p}_{\loc}(E)\big)
\end{equation}
is a local, weak sub(super)-solution to \eqref{Eq:1:1} with the structure
conditions \eqref{Eq:1:2p}, if for every compact set $K\subset E$ and every sub-interval
$[t_1,t_2]\subset (0,T]$
\begin{equation}  \label{Eq:1:4p}
	\int_K |u|^{q-1}u\z \,\dx\bigg|_{t_1}^{t_2}
	+
	\iint_{K\times (t_1,t_2)} \big[-|u|^{q-1}u\z_t+\bl{A}(x,t,u,Du)\cdot D\z\big]\dx\dt
	\le(\ge)0
\end{equation}
for all non-negative test functions
\begin{equation*}
\z\in W^{1,q+1}_{\loc}\big(0,T;L^{q+1}(K)\big)\cap L^p_{\loc}\big(0,T;W_o^{1,p}(K)%
\big).
\end{equation*}
This guarantees that all the integrals in \eqref{Eq:1:4p} are convergent.

A function $u$ that is both a local weak sub-solution and a local weak super-solution
to \eqref{Eq:1:1} -- \eqref{Eq:1:2p} is a local weak solution.

\medskip
{\it Acknowledgement.} 
N.~Liao has been supported by the FWF-Project P31956-N32
``Doubly nonlinear evolution equations".


\section{Energy Estimates}
In this section we deduce certain energy estimates for weak sub(super)-solutions.
The different roles played by sub-solutions and super-solutions are emphasized.

We first introduce some notations. 
 For any $k\in\rr$, let
\[
(u-k)_+=\max\{u-k,0\}, \qquad (u-k)_-=\max\{-(u-k),0\}.
\]

For $k, w\in\rr$ we define two quantities
\begin{equation*}
	\mathfrak g_\pm (w,k)=\pm q\int_{k}^{w}|s|^{q-1}(s-k)_\pm\,\ds.
\end{equation*}
Note that $\mathfrak g_\pm (w,k)\ge 0$.
For $b\in\rr$ and $\al>0$, we will embolden $\boldsymbol{b}^\al$ to denote the
signed $\al$-power of $b$ as 
\begin{align*}
\boldsymbol{b}^\al=
\left\{
\begin{array}{cl}
|b|^{\al-1}b, &b\neq0,\\[5pt]
0, &b=0.
\end{array}\right.
\end{align*}
Throughout the rest of this note, 
we will use the symbols 
\begin{equation*}
\left\{
\begin{aligned}
(x_o,t_o)+Q_\rho(\theta)&:=K_{\rho}(x_o)\times(t_o-\theta\rho^p,t_o),\\[5pt]
(x_o,t_o)+Q(R,S)&:=K_R(x_o)\times (t_o-S,t_o),
\end{aligned}\right.
\end{equation*} 
to denote (backward) cylinders with the indicated positive parameters;
when the context is unambiguous, we will omit the vertex $(x_o,t_o)$ from the symbols for simplicity.

The following energy estimate for local weak sub(super)-solutions defined in Section~\ref{S:1:2}
has been presented in \cite{BDLS-1}, which actually holds for all $p>1$ and $q>0$.

\begin{proposition}\label{Prop:2:1}
	Let $u$ be a  local weak sub(super)-solution to \eqref{Eq:1:1} -- \eqref{Eq:1:2p} in $E_T$.
	There exists a constant $\boldsymbol \gm (C_o,C_1,p)>0$, such that
 	for all cylinders $Q(R,S)=K_R(x_o)\times (t_o-S,t_o)\Subset E_T$,
 	every $k\in\rr$, and every non-negative, piecewise smooth cutoff function
 	$\z$ vanishing on $\pl K_{R}(x_o)\times (t_o-S,t_o)$,  there holds
\begin{align*}
	\max \bigg\{ \essup_{t_o-S<t<t_o}&\int_{K_R(x_o)\times\{t\}}	
	\z^p\mathfrak g_\pm (u,k)\,\dx,
	\iint_{Q(R,S)}\z^p|D(u-k)_\pm|^p\,\dx\dt \bigg\}\\
	&\le
	\boldsymbol \gm\iint_{Q(R,S)}
		\Big[
		(u-k)^{p}_\pm|D\z|^p + \mathfrak g_\pm (u,k)|\partial_t\z^p|
		\Big]
		\,\dx\dt\\
	&\phantom{\le\,}
	+\int_{K_R(x_o)\times \{t_o-S\}} \z^p \mathfrak g_\pm (u,k)\,\dx.
\end{align*}
\end{proposition}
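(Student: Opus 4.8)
The plan is to test the weak formulation \eqref{Eq:1:4p} with the non-negative function $\z^p(u-k)_\pm$ on the sub-cylinder $K_R(x_o)\times(t_o-S,\tau)$ for an arbitrary $\tau\in(t_o-S,t_o)$, and to take the supremum over $\tau$ at the end. This choice is dictated by the parabolic term: since the definition of $\mathfrak g_\pm$ gives $\partial_w\mathfrak g_\pm(w,k)=\pm q|w|^{q-1}(w-k)_\pm$, one has the pointwise identity
\[
\partial_t\big(|u|^{q-1}u\big)\,(u-k)_\pm=q|u|^{q-1}(u-k)_\pm\,\partial_t u=\pm\,\partial_t\mathfrak g_\pm(u,k),
\]
so that an integration by parts in time, combined with the sign of \eqref{Eq:1:4p} ($\le$ for sub-solutions, $\ge$ for super-solutions), places $\int_{K_R(x_o)\times\{\tau\}}\z^p\mathfrak g_\pm(u,k)\,\dx$ on the favorable side, while $\int_{K_R(x_o)\times\{t_o-S\}}\z^p\mathfrak g_\pm(u,k)\,\dx$ and a term bounded by $\iint\mathfrak g_\pm(u,k)\,|\partial_t\z^p|\,\dx\dt$ appear on the other side.

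Next I would treat the diffusion term by expanding $D\big(\z^p(u-k)_\pm\big)=\z^pD(u-k)_\pm+p\z^{p-1}(u-k)_\pm\,D\z$ and using that $D(u-k)_\pm=\pm Du$ on $\{(u-k)_\pm>0\}$ and vanishes elsewhere. The first inequality in \eqref{Eq:1:2p} then forces the piece coming from $\z^pD(u-k)_\pm$ to have the good sign and contribute $C_o\iint\z^p|D(u-k)_\pm|^p\,\dx\dt$ to the favorable side, while the second inequality in \eqref{Eq:1:2p} bounds the cross term by $pC_1\iint\z^{p-1}(u-k)_\pm|D(u-k)_\pm|^{p-1}|D\z|\,\dx\dt$; Young's inequality with exponents $\tfrac{p}{p-1}$ and $p$ then absorbs a small multiple of $\z^p|D(u-k)_\pm|^p$ back into the left-hand side at the cost of the term $\boldsymbol\gm\iint(u-k)_\pm^p|D\z|^p\,\dx\dt$. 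Putting the parabolic and diffusive estimates together gives the asserted inequality with $\tau$ replacing $t_o$ in the time integrals; taking the essential supremum over $\tau$ in the $\mathfrak g_\pm$-term and letting $\tau\uparrow t_o$ (monotone convergence) in the gradient term then produces the maximum on the left.

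The one real obstacle — and the reason the argument is not completely routine — is that a solution in the class \eqref{Eq:1:3p} need not be differentiable in time, so $\partial_t(|u|^{q-1}u)$ and the identity above are only formal; moreover the weight $|u|^{q-1}$ is singular at $u=0$ when $q<1$, which is allowed here. I would handle both in the standard way, by first replacing $|u|^{q-1}u$ by its Steklov average (or a Friedrichs mollification in $t$), carrying out the computation at the regularized level where the chain rule and the time integration by parts are legitimate, and then passing to the limit: the continuity $u\in C\big(0,T;L^{q+1}_{\loc}(E)\big)$ identifies the time-slice terms, the local integrability of $|u|^{q-1}$ near $u=0$ (valid since $q>0$) together with dominated convergence controls the interior time term, and weak lower semicontinuity of the $L^p$-norm of the gradient takes care of the good term. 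Since only inequalities are needed, nothing is lost in this passage, and as the construction is insensitive to the signs of $p-2$ and $p-q-1$, the estimate is valid for all $p>1$ and $q>0$; the complete details are exactly those carried out in \cite{BDLS-1}.
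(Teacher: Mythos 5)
Your proposal is correct and follows exactly the standard derivation: test with the Steklov-averaged version of $\zeta^p(u-k)_\pm$, exploit $\partial_w\mathfrak g_\pm(w,k)=\pm q|w|^{q-1}(w-k)_\pm$ to produce the $\mathfrak g_\pm$ terms after an integration by parts in time, apply \eqref{Eq:1:2p} and Young's inequality to the diffusion term, and pass to the limit. The paper itself gives no proof but cites \cite{BDLS-1}, whose argument is precisely the one you sketch, and you correctly note that the computation is indifferent to the signs of $p-2$ and $p-q-1$, hence valid for all $p>1$, $q>0$.
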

Here and in the sequel, when we state {\it ``$u$ is a sub(super)-solution...''}
and use $``\pm"$ or $``\mp"$ in what follows, we mean the sub-solution corresponds to
the upper sign and the super-solution corresponds to the lower sign in the statement.

\section{Preliminary Tools} 
For a compact set $K\subset\rr^N$ and
 a cylinder $\mathcal{Q}:=K\times(T_1,T_2]\subset E_T$
we introduce numbers $\boldsymbol\mu^{\pm}$ and $\boldsymbol\om$ satisfying
\begin{equation*}
	\boldsymbol\mu^+\ge\essup_{\mathcal{Q}}u,
	\quad 
	\boldsymbol\mu^-\le\essinf_{\mathcal{Q}} u,
	\quad
	\boldsymbol\om\ge\boldsymbol\mu^+-\boldsymbol\mu^-.
\end{equation*}
In this section, we collect some lemmas, which will be the main ingredients
in the proof of Theorem~\ref{Thm:1:1}. 
The following De Giorgi type lemma holds true for all $p>1$ and $q>0$ and has been stated in \cite[Lemma~3.1]{BDLS-1}.
For the detailed proof of the statement in a slightly different formulation, we refer to \cite[Lemma~2.2]{Liao-21}.

\begin{lemma}\label{Lm:DG:1}
 Let $u$ be a locally bounded, local weak sub(super)-solution to \eqref{Eq:1:1} -- \eqref{Eq:1:2p} in $E_T$.
 Set $\theta=(\xi\boldsymbol\om)^{q+1-p}$ for some $\xi\in(0,1)$ and assume $(x_o,t_o) + Q_\varrho(\theta) \subset \mathcal{Q}$.
There exists a constant $\nu\in(0,1)$ depending only on 
 the data, such that if
\begin{equation*}
	\Big|\Big\{
	\pm\big(\boldsymbol \mu^{\pm}-u\big)\le \xi\boldsymbol\om\Big\}\cap (x_o,t_o)+Q_{\varrho}(\theta)\Big|
	\le
	\nu|Q_{\varrho}(\theta)|,
\end{equation*}
then either
\begin{equation*}
	|\boldsymbol\mu^{\pm}|>8\xi\boldsymbol\om,
\end{equation*}
or
\begin{equation*}
	\pm\big(\boldsymbol\mu^{\pm}-u\big)\ge\tfrac{1}2\xi\boldsymbol\om
	\quad
	\mbox{a.e.~in $(x_o,t_o)+Q_{\frac{1}2\varrho}(\theta)$.}
\end{equation*}
\end{lemma}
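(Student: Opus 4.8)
The plan is to run the classical De Giorgi iteration adapted to the doubly nonlinear setting, working with the super-solution case (lower sign) for concreteness; the sub-solution case is symmetric. First I would fix the cylinder $Q_\varrho(\theta)$ with $\theta=(\xi\boldsymbol\om)^{q+1-p}$ and introduce the shrinking family of subcylinders $Q_j := Q_{\varrho_j}(\theta)$ with $\varrho_j = \tfrac12\varrho(1+2^{-j})$, so that $\varrho_j\downarrow\tfrac12\varrho$ and $\varrho_0=\varrho$. Correspondingly I would choose increasing levels $k_j = \boldsymbol\mu^- + \tfrac14\xi\boldsymbol\om(1+2^{-j})$ interpolating between $\boldsymbol\mu^- + \tfrac12\xi\boldsymbol\om$ and $\boldsymbol\mu^- + \tfrac14\xi\boldsymbol\om$, and pick standard cutoff functions $\zeta_j$ that equal $1$ on $Q_{j+1}$, vanish on $\partial_{\mathrm{par}}Q_j$, with $|D\zeta_j|\le \boldsymbol\gm 2^j/\varrho$ and $|\partial_t\zeta_j^p|\le \boldsymbol\gm 2^{pj}/(\theta\varrho^p)$. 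The iteration quantity is $Y_j := |A_j|/|Q_j|$ where $A_j := \{u < k_j\}\cap Q_j$, and the goal is to show $Y_j\to 0$, which forces $u\ge \tfrac12\xi\boldsymbol\om$ above $\boldsymbol\mu^-$ on $Q_{\varrho/2}(\theta)$.

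The core estimate combines the energy inequality of Proposition~\ref{Prop:2:1} applied to $(u-k_j)_-$ with a parabolic Sobolev embedding. The delicate point, and the reason the hypothesis splits into the alternative $|\boldsymbol\mu^{\pm}|>8\xi\boldsymbol\om$, is controlling $\mathfrak g_-(u,k_j)$ from above and below by powers of $(u-k_j)_-$: on the complementary region where $|\boldsymbol\mu^-|\le 8\xi\boldsymbol\om$ one has $|u|\lesssim \xi\boldsymbol\om$ on $A_j$, so $\mathfrak g_-(u,k_j)\approx (\xi\boldsymbol\om)^{q-1}(u-k_j)_-^2$ up to data-dependent constants — this is where the $q+1-p$ exponent in $\theta$ gets calibrated so that the time-scaling factor $\theta$ exactly absorbs the $(\xi\boldsymbol\om)^{q-1}$ weight and the $(\xi\boldsymbol\om)^p$ from $|D\zeta_j|^p(u-k_j)_-^p$ on the same footing. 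Feeding the energy bound into the embedding
\[
\iint_{Q_{j+1}} (u-k_j)_-^{p\frac{N+p}{N}}\,\dx\dt
\le
\boldsymbol\gm\Big(\essup_t\int (u-k_j)_-^{?}\,\dx\Big)^{p/N}\iint_{Q_j}|D(u-k_j)_-|^p\,\dx\dt
\]
and using that on $A_{j+1}$ one has $(u-k_j)_-\ge k_j-k_{j+1} = 2^{-(j+2)}\xi\boldsymbol\om$, I would arrive at a recursive inequality of the form $Y_{j+1}\le \boldsymbol\gm\, b^j\, Y_j^{1+\kappa}$ with $b>1$ and $\kappa>0$ depending only on the data.

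The final step is the standard fast-geometric-convergence lemma: there is $\nu=\nu(\text{data})\in(0,1)$ such that $Y_0\le \nu$ implies $Y_j\to 0$. Since $Y_0 = |\{u<\boldsymbol\mu^-+\tfrac12\xi\boldsymbol\om\}\cap Q_\varrho(\theta)|/|Q_\varrho(\theta)| \le |\{-(\boldsymbol\mu^--u)\le\xi\boldsymbol\om\}\cap Q_\varrho(\theta)|/|Q_\varrho(\theta)|$, the measure hypothesis of the lemma gives exactly $Y_0\le\nu$, and $Y_j\to 0$ says $|\{u<\boldsymbol\mu^-+\tfrac14\xi\boldsymbol\om\}\cap Q_{\varrho/2}(\theta)|=0$, which is the conclusion (with room to spare). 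I expect the main obstacle to be the bookkeeping in the energy-plus-embedding step: making sure every factor of $\xi\boldsymbol\om$ carries the right power so that the intrinsic time scale $\theta=(\xi\boldsymbol\om)^{q+1-p}$ renders the recursion scale-invariant, and verifying that the constant $\nu$ extracted at the end genuinely depends only on $\{p,q,N,C_o,C_1\}$ and not on $\xi$ or $\boldsymbol\om$. This is precisely where the "careful balancing of the double singularity" advertised in the introduction enters, and it is cleanest to handle by first rescaling $u$ so that the level set sits at height $O(1)$, run the iteration there, and scale back.
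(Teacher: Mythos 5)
The paper does not prove this lemma in-line; it cites \cite[Lemma~2.2]{Liao-21} for the detailed argument. Your plan---De Giorgi iteration driven by the energy estimate of Proposition~\ref{Prop:2:1}, a parabolic Sobolev embedding, and a fast geometric convergence lemma---is the route that reference takes, so the architecture is correct. Two concrete points need fixing before it actually delivers the stated conclusion.

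First, your levels interpolate over the wrong interval. With $k_j=\boldsymbol\mu^-+\tfrac14\xi\boldsymbol\om(1+2^{-j})$ (which, incidentally, is a \emph{decreasing} sequence, not increasing as you write) you have $k_0=\boldsymbol\mu^-+\tfrac12\xi\boldsymbol\om$ and $k_\infty=\boldsymbol\mu^-+\tfrac14\xi\boldsymbol\om$, so $Y_j\to0$ only yields $u\ge\boldsymbol\mu^-+\tfrac14\xi\boldsymbol\om$ a.e.~in $Q_{\varrho/2}(\theta)$. This is \emph{weaker} than the asserted $u\ge\boldsymbol\mu^-+\tfrac12\xi\boldsymbol\om$, not ``with room to spare.'' You want $k_j=\boldsymbol\mu^-+\tfrac12\xi\boldsymbol\om(1+2^{-j})$, running from $\boldsymbol\mu^-+\xi\boldsymbol\om$ down to $\boldsymbol\mu^-+\tfrac12\xi\boldsymbol\om$; then $Y_0\le\nu$ is literally the measure hypothesis of the lemma, and $Y_j\to0$ gives exactly its conclusion.

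Second, the one-line comparison $\mathfrak g_-(u,k_j)\approx(\xi\boldsymbol\om)^{q-1}(u-k_j)_-^2$ is not a valid starting point across the full range $q>0$. When $0<q<1$ the weight $|s|^{q-1}$ is singular at $s=0$ (which can lie between $u$ and $k_j$ once $|\boldsymbol\mu^-|\lesssim\xi\boldsymbol\om$), and the correct two-sided bound is $\mathfrak g_-(u,k_j)\approx(u-k_j)_-^{q+1}$. When $q\ge1$ the upper bound $\mathfrak g_-\le\gamma\max\{|u|,|k_j|\}^{q-1}(u-k_j)_-^2$ is fine, but the matching lower bound feeding the essential-sup term into the Sobolev embedding again has to go through the $(u-k_j)_-^{q+1}$ form, since $u$ and $k_j$ may both be close to zero. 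The alternative $|\boldsymbol\mu^\pm|\le8\xi\boldsymbol\om$ is precisely what confines the truncations to $|u|,|k_j|\lesssim\xi\boldsymbol\om$ so that in both regimes the resulting powers combine with $\theta=(\xi\boldsymbol\om)^{q+1-p}$ and $|D\zeta_j|\lesssim2^j/\varrho$ to make the recursion scale-invariant; this is exactly the split by $q\gtrless1$ carried out in the proof of Lemma~\ref{Lm:3:2} in this paper, which you can transplant verbatim. With those two adjustments your proposal tracks the cited proof.
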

The next lemma propagates measure information in time.
An analogous result has been presented in \cite[Lemma~4.1]{BDL} for $q=p-1$.
It is noteworthy that this lemma actually holds true for all $p>1$ and $q>0$.
\begin{lemma}\label{Lm:3:1}
 Let $u$ be a locally bounded, local weak sub(super)-solution to \eqref{Eq:1:1} -- \eqref{Eq:1:2p} in $E_T$.
Introduce parameters $\xi$ and $\al$ in $(0,1)$. There exist $\dl$ and $\eps$ in $(0,1)$, depending only on the data and $\al$, such that if
	\begin{equation*}
	\Big|\Big\{
		\pm\big(\boldsymbol \mu^{\pm}-u(\cdot, t_o)\big)\ge \xi\boldsymbol \om
		\Big\}\cap K_{\varrho}(x_o)\Big|
		\ge\al \big|K_{\varrho}\big|,
	\end{equation*}
	then either $$|\boldsymbol \mu^{\pm}|>8\xi\boldsymbol \om$$ or
	\begin{equation*}
	\Big|\Big\{
	\pm\big(\boldsymbol \mu^{\pm}-u(\cdot, t)\big)\ge \eps \xi\boldsymbol \om\Big\} \cap K_{\varrho}(x_o)\Big|
	\ge\frac{\al}2 |K_\varrho|
	\quad\mbox{ for all $t\in\big(t_o,t_o+\dl(\xi\boldsymbol \om)^{q+1-p}\varrho^p\big]$,}
\end{equation*}
provided this cylinder is included in $\mathcal{Q}$.
\end{lemma}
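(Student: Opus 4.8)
The plan is to prove Lemma~\ref{Lm:3:1} by testing the energy inequality of Proposition~\ref{Prop:2:1} over a cylinder $(x_o,t_o)+Q(R,S)$ with $R=\varrho$ and $S=\delta(\xi\boldsymbol\om)^{q+1-p}\varrho^p$, with the level $k=\boldsymbol\mu^\pm\mp\xi\boldsymbol\om$, so that $\pm(\boldsymbol\mu^\pm-u)\ge\xi\boldsymbol\om$ is exactly the set where $(u-k)_\mp\ge 0$ is large; more precisely one chooses a slightly smaller sub-level to obtain a measure-theoretic \textbf{reverse} comparison for the truncation. The point is to compare the measure of the "good" set (where $\pm(\boldsymbol\mu^\pm-u(\cdot,t))$ is bounded below by $\varepsilon\xi\boldsymbol\om$) at a generic later time $t$ with its value at the initial time $t_o$, and to show that the deficit cannot exceed $\tfrac{\alpha}{2}|K_\varrho|$ provided $\delta$ and $\varepsilon$ are small.

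The key steps, in order, are as follows. First, assume the alternative $|\boldsymbol\mu^\pm|\le 8\xi\boldsymbol\om$ fails to hold trivially, i.e.\ work under $|\boldsymbol\mu^\pm|\le 8\xi\boldsymbol\om$; this caps $|u|$ on the relevant region by a multiple of $\xi\boldsymbol\om$, which is what converts $\mathfrak g_\pm(u,k)$ into a quantity comparable to $(\xi\boldsymbol\om)^{q-1}(u-k)_\pm^2$ up to constants depending on the data (here the structural balance $q>p-1$ enters to control exponents). Second, pick a cutoff $\z(x)$ that equals $1$ on $K_{(1-\sigma)\varrho}(x_o)$, vanishes on $\partial K_\varrho(x_o)$, and is time-independent, so the $|\partial_t\z^p|$ term drops out entirely; then $|D\z|\le(\sigma\varrho)^{-1}$. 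Third, insert these into the energy estimate to bound, uniformly in $t\in(t_o,t_o+S]$,
\[
\int_{K_{(1-\sigma)\varrho}\times\{t\}}\mathfrak g_\pm(u,k)\,\dx
\le
\int_{K_\varrho\times\{t_o\}}\mathfrak g_\pm(u,k)\,\dx
+\frac{\boldsymbol\gm}{(\sigma\varrho)^p}\iint_{Q(\varrho,S)}(u-k)_\pm^p\,\dx\dt.
\]
Fourth, estimate the right-hand side: the initial term is bounded by $\boldsymbol\gm(\xi\boldsymbol\om)^{q+1}|K_\varrho|$ (no smallness yet, but it is multiplied against the wrong side — actually one uses that on the complement of the hypothesis set it is controlled, and on the hypothesis set it is simply bounded), and the space-time integral is bounded by $\boldsymbol\gm\,\sigma^{-p}\varrho^{-p}\cdot(\xi\boldsymbol\om)^p\cdot|K_\varrho|\cdot S=\boldsymbol\gm\,\sigma^{-p}\delta\,(\xi\boldsymbol\om)^{q+1}|K_\varrho|$, so that the full bound on the left at time $t$ is $\boldsymbol\gm(1+\sigma^{-p}\delta)(\xi\boldsymbol\om)^{q+1}|K_\varrho|$ — this is not yet useful, so instead one must run the argument the other way: start from the \emph{lower} bound on $\mathfrak g_\pm(u,k)$ coming from the hypothesis at $t_o$, and show it persists. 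Concretely, $\mathfrak g_\pm(u(\cdot,t_o),k)\ge c(\xi\boldsymbol\om)^{q+1}\alpha|K_\varrho|$ on the hypothesis set, which forces the left side at time $t$ to stay above $c(\xi\boldsymbol\om)^{q+1}\alpha|K_\varrho|-\boldsymbol\gm(\sigma^{-p}\delta+\text{Caccioppoli on the boundary collar }\sigma)(\xi\boldsymbol\om)^{q+1}|K_\varrho|$. Fifth, choose $\sigma=\sigma(\alpha)$ small so the boundary-collar loss $\le\tfrac14 c\alpha|K_\varrho|(\xi\boldsymbol\om)^{q+1}$, then $\delta=\delta(\text{data},\alpha)$ small so the propagation loss is also $\le\tfrac14 c\alpha|K_\varrho|(\xi\boldsymbol\om)^{q+1}$; hence $\int_{K_{(1-\sigma)\varrho}\times\{t\}}\mathfrak g_\pm(u,k)\,\dx\ge\tfrac12 c\alpha(\xi\boldsymbol\om)^{q+1}|K_\varrho|$. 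Sixth, convert this integral lower bound back into a measure lower bound: split $K_{(1-\sigma)\varrho}$ into the set where $\pm(\boldsymbol\mu^\pm-u(\cdot,t))\ge\varepsilon\xi\boldsymbol\om$ and its complement; on the complement, $\mathfrak g_\pm(u,k)\le\boldsymbol\gm\,\varepsilon\,(\xi\boldsymbol\om)^{q+1}$ (since the truncation there is $\le\varepsilon\xi\boldsymbol\om$ in size while the weight $|s|^{q-1}$ is $\lesssim(\xi\boldsymbol\om)^{q-1}$), so its contribution is $\le\boldsymbol\gm\varepsilon(\xi\boldsymbol\om)^{q+1}|K_\varrho|$; on the good set itself, trivially $\mathfrak g_\pm\le\boldsymbol\gm(\xi\boldsymbol\om)^{q+1}$. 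Hence
\[
\tfrac12 c\alpha(\xi\boldsymbol\om)^{q+1}|K_\varrho|
\le
\boldsymbol\gm(\xi\boldsymbol\om)^{q+1}\big|\{\pm(\boldsymbol\mu^\pm-u(\cdot,t))\ge\varepsilon\xi\boldsymbol\om\}\cap K_\varrho(x_o)\big|
+\boldsymbol\gm\varepsilon(\xi\boldsymbol\om)^{q+1}|K_\varrho|,
\]
and choosing $\varepsilon=\varepsilon(\text{data},\alpha)$ small so that $\boldsymbol\gm\varepsilon\le\tfrac14 c\alpha$ yields the measure bound $\ge\tfrac{c\alpha}{4\boldsymbol\gm}|K_\varrho|$; absorbing constants (or tracking the precise value of $c$ from $\mathfrak g_\pm$) gives the claimed $\tfrac{\alpha}{2}|K_\varrho|$.

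The main obstacle I expect is the careful two-sided comparison of $\mathfrak g_\pm(u,k)$ with the model quantity $(\xi\boldsymbol\om)^{q-1}(u-k)_\pm^2$ (and with $(\xi\boldsymbol\om)^{q+1}$), uniformly in the regime $|\boldsymbol\mu^\pm|\le 8\xi\boldsymbol\om$ but with $u$ possibly changing sign, so that the integrand $|s|^{q-1}$ in the definition can vanish and the naive lower bound degenerates near $s=0$; one has to argue separately according to whether $|k|$ is comparable to $\xi\boldsymbol\om$ or much smaller, and this case analysis — which is exactly where $0<p-1<q$ is used to keep all exponents in the right order — is the delicate part. A secondary, more bookkeeping-type difficulty is organizing the argument so that the time-propagation loss genuinely scales with $\delta$ (hence can be made small) rather than being swamped by the initial data term; this is handled by never discarding the good sign of the $L^\infty$-in-time term on the left and by estimating $(u-k)_\pm^p\le(\xi\boldsymbol\om)^p$ pointwise on the cylinder, so the space-time integral carries the factor $S=\delta(\xi\boldsymbol\om)^{q+1-p}\varrho^p$ cleanly.
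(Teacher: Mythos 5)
Your proposal has a direction error that invalidates the core of the argument. You write $(u-k)_\mp$ (so $(u-k)_-$ for sub-solutions) and then, in steps four and five, argue from a lower bound $\int_{K_\varrho\times\{t_o\}}\mathfrak g_\pm(u,k)\,\dx\ge c\alpha(\xi\boldsymbol\om)^{q+1}|K_\varrho|$ that ``persists'' at later times. But Proposition~\ref{Prop:2:1} pairs sub-solutions with $\mathfrak g_+$, $(u-k)_+$ and super-solutions with $\mathfrak g_-$, $(u-k)_-$, and in either case it yields an \emph{upper} bound
$\essup_t\int\z^p\mathfrak g_\pm(u,k)\,\dx\le \int_{K_R\times\{t_o-S\}}\z^p\mathfrak g_\pm(u,k)\,\dx+\text{small}$;
there is no mechanism there to prevent the integral from decreasing. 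Said differently: with $k=\boldsymbol\mu^\pm\mp\xi\boldsymbol\om$ and the truncation that Proposition~\ref{Prop:2:1} actually allows, $\mathfrak g_\pm(u(\cdot,t_o),k)\equiv 0$ on the hypothesis set, so the initial integral is supported on its complement and is in fact \emph{small}, not large; your inequality ``$\int_{t_o}\mathfrak g_\pm\ge c\alpha(\ldots)$'' is false for the admissible pairing. The attempt to re-route through a ``lower bound that persists'' would require a reverse Caccioppoli estimate (controlling $-C_o\iint\z^p|D(u-k)_\mp|^p$ from below), which you do not have and which does not follow from the structure conditions.

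The argument runs forward, not backward, and it is essentially the one you started to write in step three before abandoning it. With $k=\boldsymbol\mu^\pm\mp\xi\boldsymbol\om$, a time-independent cutoff $\z=1$ in $K_{(1-\sigma)\varrho}$, and the admissible truncation sign, the energy inequality gives for all $t\in(t_o,t_o+S]$
\[
\int_{K_{(1-\sigma)\varrho}\times\{t\}}\mathfrak g_\pm(u,k)\,\dx
\le
\int_{K_\varrho\times\{t_o\}}\mathfrak g_\pm(u,k)\,\dx
+\frac{\boldsymbol\gm}{(\sigma\varrho)^p}\iint_{Q(\varrho,S)}(u-k)_\pm^p\,\dx\dt.
\]
By the hypothesis and the fact that $\mathfrak g_\pm(\cdot,k)$ is monotone, the initial term is $\le(1-\alpha)\,\mathfrak g_\pm(\boldsymbol\mu^\pm,k)\,|K_\varrho|$; under $|\boldsymbol\mu^\pm|\le 8\xi\boldsymbol\om$ the space--time term is $\le\boldsymbol\gm\sigma^{-p}\delta(\xi\boldsymbol\om)^{q+1}|K_\varrho|$, and $(\xi\boldsymbol\om)^{q+1}\le\boldsymbol\gm\,\mathfrak g_\pm(\boldsymbol\mu^\pm,k)$ (this is where the two-sided bookkeeping for $\mathfrak g_\pm$ enters, with separate cases $q\ge1$ and $0<q<1$). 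On the other side, on the set where $\pm(\boldsymbol\mu^\pm-u(\cdot,t))<\varepsilon\xi\boldsymbol\om$ one has $\mathfrak g_\pm(u(\cdot,t),k)\ge \mathfrak g_\pm(\boldsymbol\mu^\pm\mp\varepsilon\xi\boldsymbol\om,k)\ge(1-\boldsymbol\gm\,\varepsilon^{\min\{q,1\}})\,\mathfrak g_\pm(\boldsymbol\mu^\pm,k)$, again uniformly thanks to $|\boldsymbol\mu^\pm|\le 8\xi\boldsymbol\om$. Dividing out $\mathfrak g_\pm(\boldsymbol\mu^\pm,k)$ yields
\[
\big|\{\pm(\boldsymbol\mu^\pm-u(\cdot,t))<\varepsilon\xi\boldsymbol\om\}\cap K_\varrho\big|
\le
\frac{1-\alpha}{1-\boldsymbol\gm\varepsilon^{\min\{q,1\}}}|K_\varrho|
+N\sigma|K_\varrho|
+\boldsymbol\gm\sigma^{-p}\delta|K_\varrho|,
\]
and choosing $\varepsilon$, then $\sigma$, then $\delta$ in terms of the data and $\alpha$ so that each correction is at most $\tfrac{\alpha}8|K_\varrho|$ gives $\le(1-\tfrac\alpha2)|K_\varrho|$, which is the claim. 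Note that comparing $\mathfrak g_\pm$ to the single reference value $\mathfrak g_\pm(\boldsymbol\mu^\pm,k)$, rather than to a crude $(\xi\boldsymbol\om)^{q+1}$ as in your step six, is what makes the ratio land exactly at $1-\alpha$ up to $O(\varepsilon^{\min\{q,1\}})$; with two unrelated constants $c$ and $\boldsymbol\gm$ as you have it, you cannot recover the factor $\tfrac\alpha2$. The paper in fact proves the lemma by reference to \cite[Lemma~4.1]{BDL} with $M=\xi\boldsymbol\om$ and the cylinder $K_\varrho(x_o)\times(t_o,t_o+\delta(\xi\boldsymbol\om)^{q+1-p}\varrho^p]$, which is precisely this forward argument.
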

\begin{proof}
The proof of the Lemma \ref{Lm:3:1} can be adapted from the proof of \cite[Lemma~4.1]{BDL} by setting $M = \xi \boldsymbol{\omega}$ and considering the energy estimate for the cylinder $Q = K_\rho(x_o) \times (t_o, t_o + \delta (\xi \boldsymbol{\omega} )^{q+1-p} \rho^p]$ instead of $K_\rho(x_o) \times (t_o, t_o + \delta \rho^p]$.
Further, in \cite{BDL} the term $\mathfrak{g}_\pm$ has been defined for the parameter $q=p-1$.
Thus, in the present situation all computations related to $\mathfrak{g}_\pm$ have to be performed with a general parameter $q > p-1$ instead of $q=p-1$.
\end{proof}

The following measure shrinking lemma holds true for all $p>1$ and $q>p-1$.
\begin{lemma}\label{Lm:3:2}
 Let $u$ be a locally bounded, local weak sub(super)-solution to \eqref{Eq:1:1} -- \eqref{Eq:1:2p} in $E_T$.
Suppose that for some $B>1$, and some $\dl$ and $\xi$ in $(0,1)$, there holds
	\begin{equation*}
	\Big|\Big\{
		\pm\big(\boldsymbol \mu^{\pm}-u(\cdot, t)\big)\ge \xi\boldsymbol \om
		\Big\}\cap K_{B\varrho}(x_o)\Big|
		\ge\al \big|K_{B\varrho}\big|\quad\mbox{ for all $t\in\big(t_o,t_o+\dl\boldsymbol \om^{q+1-p}\varrho^p\big]$,}
	\end{equation*}
Let $\widehat Q_B=K_{B\varrho}(x_o)\times\left(t_o,t_o+\dl\boldsymbol \om^{q+1-p}\rho^p\right]$.
There exist $\varep=B^{\frac{p}{p-q-1}}$ and
 $\boldsymbol \gm>0$ depending only on the data,  $\dl$ and $\alpha$, such that for any positive integer $j_*$,
we have  either $$|\boldsymbol\mu^{\pm}|>\eps \xi\boldsymbol \om 2^{-j_*}$$ or
\begin{equation*}
	\bigg|\bigg\{
	\pm\big(\boldsymbol \mu^{\pm}-u\big)\le\frac{\eps \xi\boldsymbol \om}{2^{j_*}}\bigg\}\cap \widehat Q_B\bigg|
	\le\frac{\boldsymbol\gm}{j_*^{\frac{p-1}p}}|\widehat Q_B|,
\end{equation*}
provided $\widehat{Q}_B$ is included in $\mathcal{Q}$.
\end{lemma}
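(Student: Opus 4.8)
The statement is a logarithmic‐type measure shrinking estimate, and the natural route is to test the energy inequality of Proposition~\ref{Prop:2:1} against a suitable truncation and extract, after summing a telescoping series over the levels, the decay in $j_*^{-(p-1)/p}$. First I would fix the reference cylinder $\widehat Q_B=K_{B\varrho}(x_o)\times(t_o,t_o+\dl\boldsymbol\om^{q+1-p}\varrho^p]$ and, for $0\le j\le j_*$, introduce the levels $k_j=\boldsymbol\mu^\pm\mp \eps\xi\boldsymbol\om 2^{-j}$ with $\eps=B^{p/(p-q-1)}$, working on the sets where $\pm(\boldsymbol\mu^\pm-u)\le \eps\xi\boldsymbol\om 2^{-j}$. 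Choosing a cutoff $\z$ that equals $1$ on the slightly smaller cylinder $K_{\varrho}(x_o)\times(t_o,t_o+\dl\boldsymbol\om^{q+1-p}\varrho^p]$, vanishes on $\partial K_{B\varrho}(x_o)$, with $|D\z|\le (B\varrho)^{-1}$ and $|\partial_t\z^p|\lesssim (\dl\boldsymbol\om^{q+1-p}\varrho^p)^{-1}$, the energy estimate for the super(sub)-solution controls $\iint \z^p|D(u-k_j)_\mp|^p$ in terms of the right–hand side; the hypothesis with $B\varrho$ guarantees that the initial slice term $\int_{K_{B\varrho}\times\{t_o\}}\z^p\mathfrak g_\mp(u,k_j)\,\dx$ can be absorbed or is harmless, because on the set $\{\pm(\boldsymbol\mu^\pm-u(\cdot,t_o))\ge\xi\boldsymbol\om\}$ of density $\ge\al$ the truncation $(u-k_j)_\mp$ vanishes when $\eps 2^{-j}<1$. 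The crucial gain hidden here is dimensional: the scaling factor $\eps=B^{p/(p-q-1)}$ is exactly what makes $(\eps\xi\boldsymbol\om)^{q+1-p}\varrho^p$ comparable to $\dl\boldsymbol\om^{q+1-p}\varrho^p$ up to the $B$-dilation, so the energy estimate is applied on the ``right'' intrinsic cylinder and all constants stay data–dependent.

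Next I would invoke a De Giorgi–Poincaré inequality on each time slice to pass from the gradient bound to a measure bound: on the slice $K_\varrho(x_o)\times\{t\}$, the set $\{u<k_{j+1}\}$ (for the super-solution case) has, by Lemma~\ref{Lm:3:1}–type propagation, positive density throughout the time interval, so the isoperimetric inequality between the levels $k_{j+1}<k_j$ gives
\[
(k_j-k_{j+1})\,\big|\{u<k_{j+1}\}\cap K_\varrho\big|
\le \frac{\boldsymbol\gm\varrho^{N+1}}{\big|\{u>k_j\}\cap K_\varrho\big|}\int_{\{k_{j+1}<u<k_j\}\cap K_\varrho}|Du|\,\dx.
\]
Integrating in $t$ over $(t_o,t_o+\dl\boldsymbol\om^{q+1-p}\varrho^p]$, applying Hölder with exponent $p$ to the gradient integral, and using the energy bound just derived, one obtains $A_{j+1}^{p/(p-1)}\le \boldsymbol\gm\, (A_j-A_{j+1})$, where $A_j:=|\{\pm(\boldsymbol\mu^\pm-u)\le\eps\xi\boldsymbol\om 2^{-j}\}\cap\widehat Q_B|$ (after normalising levels so that $k_j-k_{j+1}=\eps\xi\boldsymbol\om 2^{-j-1}$ cancels against the factor from the truncation). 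Summing this inequality over $j=0,\dots,j_*-1$ and using that $A_j$ is monotone, so $\sum(A_j-A_{j+1})\le |\widehat Q_B|$, yields $j_*\,A_{j_*}^{p/(p-1)}\le \boldsymbol\gm|\widehat Q_B|^{p/(p-1)}$, which is exactly the desired bound $A_{j_*}\le \boldsymbol\gm\, j_*^{-(p-1)/p}|\widehat Q_B|$. Throughout, if at some level $|\boldsymbol\mu^\pm|>\eps\xi\boldsymbol\om 2^{-j_*}$ one is in the first alternative and there is nothing to prove, so one may assume $|\boldsymbol\mu^\pm|\le\eps\xi\boldsymbol\om 2^{-j_*}$, which also ensures the truncations behave well near the origin where $s\mapsto|s|^{q-1}s$ degenerates.

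The main obstacle I anticipate is the careful bookkeeping around the quantity $\mathfrak g_\pm(u,k)$, which in the doubly nonlinear setting replaces the clean $\tfrac12(u-k)_\pm^2$ of the $p$-Laplacian and is only comparable to $(u-k)_\pm^{q+1}$ or to $|k|^{q-1}(u-k)_\pm^2$ depending on the relative size of $|u|$, $|k|$ and the oscillation—this is precisely why the assumption $|\boldsymbol\mu^\pm|\le\eps\xi\boldsymbol\om 2^{-j_*}$ must be carried through, and why one needs the levels $k_j$ close to $\boldsymbol\mu^\pm$ rather than close to $0$. Matching the intrinsic time scale $\dl\boldsymbol\om^{q+1-p}\varrho^p$ against the "natural" scale $(\eps\xi\boldsymbol\om)^{q+1-p}\varrho^p$ at level $j$ forces the specific choice $\eps=B^{p/(p-q-1)}$ and makes the constant $\boldsymbol\gm$ depend on $\dl$ and $\al$ but not on $B$, $\xi$ or $\boldsymbol\om$; verifying this scaling invariance cleanly, together with checking that the contribution of $|\partial_t\z^p|\,\mathfrak g_\pm(u,k_j)$ is of the same order as the $|D\z|^p(u-k_j)_\pm^p$ term, is the delicate computational heart of the argument. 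The remaining ingredients—the slice-wise isoperimetric inequality and the fast-geometric-convergence-style summation—are by now standard once the energy estimate is correctly scaled, so I expect no essential difficulty there beyond routine care with the exponents $q+1-p$ and $(p-1)/p$.
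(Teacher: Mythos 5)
Your overall strategy is the same as the paper's: take levels $k_j=\boldsymbol\mu^\pm\mp\eps\xi\boldsymbol\om 2^{-j}$, use the energy estimate from Proposition~\ref{Prop:2:1} to bound $\iint|D(u-k_j)_\mp|^p$, pass to a measure bound via the De Giorgi isoperimetric lemma on time slices, and sum the telescoping inequality $|A_{j+1}|^{p/(p-1)}\le\boldsymbol\gm|\widehat Q_B|^{1/(p-1)}(|A_j|-|A_{j+1}|)$. You also correctly identify that the choice $\eps=B^{p/(p-q-1)}$ is precisely what compensates the factor $B^p$ appearing when converting the power $q+1$ of $\mathfrak g_\pm$ into the power $p$. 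However, the geometry you set up would not prove the stated lemma.

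The gap is in the spatial scale of your cutoff and of the slicewise isoperimetric inequality. You take $\z\equiv1$ on $K_\rho(x_o)$ and vanishing on $\partial K_{B\rho}(x_o)$, which controls the gradient of $(u-k_j)_\mp$ only over $K_\rho\times(t_o,t_o+\dl\boldsymbol\om^{q+1-p}\rho^p]$; you then apply the isoperimetric inequality over $K_\rho$. But the hypothesis provides measure density on $K_{B\rho}$ (not on $K_\rho$), and the conclusion is a measure-shrinking estimate inside $\widehat Q_B=K_{B\rho}\times(\cdot,\cdot]$ — indeed you yourself define $A_j$ as a subset of $\widehat Q_B$ at the end. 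With your cutoff and cube, neither the density assumption nor the conclusion is available on the correct scale. The correct geometry (used in the paper) is to take $\z\equiv1$ on $K_{B\rho}$ and vanish on $\partial K_{2B\rho}$, so that the gradient bound and the isoperimetric lemma both operate over $K_{B\rho}$, matching the hypothesis and the conclusion. Separately, your claim that the initial slice term $\int_{K_{B\rho}\times\{t_o\}}\z^p\mathfrak g_\mp(u,k_j)\,\dx$ is ``harmless because the truncation vanishes on a set of density $\ge\al$'' is not correct: it vanishes only on that subset and contributes on the rest; this term must actually be estimated using $\mathfrak g_\mp(u,k_j)\le\boldsymbol\gm(\eps\xi\boldsymbol\om 2^{-j})^{q+1}$ (valid thanks to the alternative $|\boldsymbol\mu^\pm|\le\eps\xi\boldsymbol\om 2^{-j_*}$), which is exactly where your choice of $\eps$ is then invoked. (A time-dependent cutoff vanishing at $t_o$ would avoid the initial term, but then you must carry the $|\partial_t\zeta^p|\mathfrak g_\mp$ contribution, and the same $\mathfrak g_\mp$-estimate and the same $\eps$ are needed.)
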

\begin{proof}
We only show the case of super-solutions, the case of sub-solutions  being similar.
Moreover, we assume $(x_o,t_o)=(0,0)$.
We  employ the energy estimate of Proposition~\ref{Prop:2:1} in $K_{2B\rho}\times (0,\delta\theta\rho^p]$ 
where $\theta=\boldsymbol \om^{q+1-p}$,
with levels
\begin{equation*}
	k_j:=\boldsymbol \mu^-+\frac{\eps \xi\boldsymbol \om}{2^{j}},\quad j=0,1,\cdots, j_*,
\end{equation*}
 and introduce
 a  cutoff function $\z$ in $K_{2B\varrho}$ (independent of $t$) that is equal to $1$ in $K_{B\varrho}$ and vanishes
on $\pl K_{2B\varrho}$, such that $|D\z|\le(B\varrho)^{-1}$.
Then, we obtain that
\begin{align*}
	\iint_{\widehat Q_B} |D(u-k_j)_-|^p\,\dx\dt
	&\le
	\int_{K_{2B\varrho}\times\{0\}} \mathfrak g_- (u,k_j) \,\dx\\
	&\quad+ 
	\frac{\boldsymbol\gm}{(B\rho)^p}
	\iint_{K_{2B\rho}\times (0,\delta\theta\rho^p]}(u-k_j)_-^p \,\dx\dt.
\end{align*}
Now we estimate the various terms on the right-hand side.
Let us start with the first one.
When $q\ge 1$, we use $\boldsymbol \mu^-\le u\le k_j$ and $|\boldsymbol\mu^{-}|\le\eps \xi\boldsymbol \om 2^{-j_*}$ to estimate
\begin{equation*}
	\mathfrak g_- (u,k_j)
	=
	q \int_u^{k_j} |s|^{q-1} (s-k_j)_- \,\ds
	\le
	\boldsymbol\gm \max\{|u|,|k_j|\}^{q-1}(u-k_j)_-^2
	\le
	\boldsymbol\gm\left(\frac{\eps \xi\boldsymbol \om}{2^j}\right)^{q+1}
\end{equation*}
for a positive constant $\boldsymbol{\gamma} = \boldsymbol{\gamma}(q)$.
When $0<q\le1$, we use $u\ge \boldsymbol \mu^-$ to obtain
\begin{align*}
	\mathfrak g_- (u,k_j)
	\le
	q (u-k_j)_- \int_{-\frac{1}{2} (u-k_j)_-}^{\frac{1}{2} (u-k_j)_-} |s|^{q-1} \,\ds
	=
	\boldsymbol\gm (u-k_j)_-^{q+1}
	\le
	\boldsymbol\gm\left(\frac{\eps \xi\boldsymbol \om}{2^j}\right)^{q+1},
\end{align*}
for a positive constant $\boldsymbol{\gamma} = \boldsymbol{\gamma}(q)$.
This implies that in any case there holds
\begin{equation*}
\begin{aligned}
	\int_{K_{2B\varrho}\times\{0\}}\mathfrak g_- (u,k_j)\,\dx
	&\le
	 \frac{\boldsymbol \gm}{\dl\theta\varrho^p}
	 \left(\frac{\eps \xi\boldsymbol \om}{2^j}\right)^{q+1} |\widehat Q_B|\\
	 &\le	 \frac{\boldsymbol \gm B^p\varep^{q+1-p}}{\dl(B\varrho)^p}
	 \left(\frac{\eps \xi\boldsymbol \om}{2^j}\right)^{p} |\widehat Q_B|\\
	& \le\frac{\boldsymbol\gm}{\dl(B\varrho)^p}\left(\frac{\eps\xi\boldsymbol \om}{2^j}\right)^p|\widehat Q_B|,
\end{aligned}
\end{equation*}
provided we enforce the condition that $\varep\le B^{\frac{p}{p-q-1}}$.
Here we have used the fact that $q>p-1$.
 In the second integral appearing on the right-hand side of the energy estimate, we may employ the bound
$(u-k_j)_-\le \eps \xi\boldsymbol \om 2^{-j}$ to estimate.
Therefore, in all cases the energy estimate yields that
\begin{equation*}
	\iint_{\widehat Q_B}|D(u-k_j)_-|^p\,\dx\dt\le\frac{\boldsymbol\gm}{\dl(B\varrho)^p}\left(\frac{\eps\xi\boldsymbol \om}{2^j}\right)^p|\widehat Q_B|.
\end{equation*}
Next, we apply \cite[Chapter I, Lemma 2.2]{DB} slicewise 
to $u(\cdot,t)$ for 
$t\in\left(0,\dl\theta\rho^p\right]$
 over the cube $K_\varrho$,
for levels $k_{j+1}<k_{j}$.
Taking into account the measure theoretical information
\begin{equation*}
	\Big|\Big\{u(\cdot, t)>\boldsymbol \mu^-+\eps \xi\boldsymbol \om\Big\}\cap K_{B\varrho}\Big|\ge\al |K_{B\varrho}|
	\qquad\mbox{for all $t\in(0,\dl\theta\rho^p]$,}
\end{equation*}
this gives
\begin{align*}
	&(k_j-k_{j+1})\big|\big\{u(\cdot, t)<k_{j+1} \big\}
	\cap K_{B\varrho}\big|
	\\
	&\qquad\le
	\frac{\boldsymbol\gm (B\varrho)^{N+1}}{\big|\big\{u(\cdot, t)>k_{j}\big\}\cap K_{B\varrho}\big|}	
	\int_{\{ k_{j+1}<u(\cdot,t)<k_{j}\} \cap K_{B\varrho}}|Du(\cdot,t)|\,\dx\\
	&\qquad\le
	\frac{\boldsymbol \gm B\varrho}{\al}
	\bigg[\int_{\{k_{j+1}<u(\cdot,t)<k_{j}\}\cap K_{B\varrho}}|Du(\cdot,t)|^p\,\dx\bigg]^{\frac1p}
	\big|\big\{ k_{j+1}<u(\cdot,t)<k_{j}\big\}\cap K_{B\varrho}\big|^{1-\frac1p}
	\\
	&\qquad=
	\frac{\boldsymbol \gm B\varrho}{\al}
	\bigg[\int_{K_{B\varrho}}|D(u-k_j)_-(\cdot,t)|^p\,\dx\bigg]^{\frac1p}
	\big[ |A_j(t)|-|A_{j+1}(t)|\big]^{1-\frac1p}.
\end{align*}
Here we used in the last line the short hand notation $ A_j(t):= \big\{u(\cdot,t)<k_{j}\big\}\cap K_{B\varrho}$.
We now integrate the last inequality with respect to $t$ over  $(0,\dl\theta\rho^p]$ and apply H\"older's inequality in time.
With the abbreviation $A_j=\{u<k_j\}\cap \widehat Q_B$ this procedure leads to
\begin{align*}
	\frac{\eps \xi\boldsymbol \om}{2^{j+1}}\big|A_{j+1}\big|
	&\le
	\frac{\boldsymbol \gm B\varrho}{\al}\bigg[\iint_{\widehat Q_B}|D(u-k_j)_-|^p\,\dx\dt\bigg]^\frac1p
	\big[|A_j|-|A_{j+1}|\big]^{1-\frac{1}p}\\
	&\le
	\boldsymbol\gm \frac{\eps \xi\boldsymbol \om}{2^j}|\widehat Q_B|^{\frac1p}\big[|A_j|-|A_{j+1}|\big]^{1-\frac{1}p}.
\end{align*}
Here $\boldsymbol\gm$ in the last line has absorbed  
$\delta$ and $\alpha$. 
Now take the power $\frac{p}{p-1}$ on both sides of the above inequality to obtain
\[
	\big|A_{j+1}\big|^{\frac{p}{p-1}}
	\le
	\boldsymbol \gm|\widehat Q_B|^{\frac1{p-1}}\big[|A_j|-|A_{j+1}|\big].
\]
Add these inequalities from $0$ to $j_*-1$ to obtain
\[
	j_* |A_{j_*}|^{\frac{p}{p-1}}\le\boldsymbol\gm|\widehat Q_B|^{\frac{p}{p-1}}.
\]
From this we conclude
\[
	|A_{j_*}|\le\frac{\boldsymbol\gm}{j_*^{\frac{p-1}p}}|\widehat Q_B|.
\]
This completes the proof.
\end{proof}

The next lemma is the main component in the proof of Theorem~\ref{Thm:1:1}.
It translates measure theoretical information into pointwise estimate over {\it expanded} cylinders.
This  is essentially the expansion of positivity for 
the singular, parabolic $p$-Laplacian first established in \cite{ChenDB92}; 
see also \cite{DBGV-pams} and \cite[Proposition~5.1, Chapter~4]{DBGV-mono} for a different proof.
As such it actually holds for $1<p<2$ and $q>0$. 
\begin{lemma}\label{Lm:expansion:p}
Let $u$ be a locally bounded, local, weak sub(super)-solution to \eqref{Eq:1:1} -- \eqref{Eq:1:2p} in $E_T$.
	Introduce the parameters $\Lm,\,c>0$ and $\al\in(0,1)$.
	Suppose that
	\[
	c\boldsymbol\om\le\pm\boldsymbol \mu^{\pm}\le\Lm\boldsymbol\om
	\]
	and for some $0<a \leq \frac12c$,
	\begin{equation*}
		\Big|\Big\{\pm\big(\boldsymbol \mu^{\pm}-u(\cdot, t_o)\big)\ge a\boldsymbol\om\Big\}\cap K_{\varrho}(x_o)\Big|
		\ge
		\al \big|K_\varrho\big|.
	\end{equation*} 
There exist constants $\dl,\,\eta\in(0,1)$ depending on the data, $\Lm$, $c$ and $\al$, such that
\begin{equation*}
	\pm\big(\boldsymbol \mu^{\pm}-u\big)\ge a \eta \boldsymbol\om
	\quad
	\mbox{a.e.~in $K_{2\varrho}(x_o)\times\big\{ t_o+  a^{2-p} \dl \boldsymbol\om^{q+1-p}\varrho^p\big\},$}
\end{equation*}
provided the cylinder $K_{2\varrho}(x_o) \times \big( t_o, t_o+  a^{2-p} \dl \boldsymbol\om^{q+1-p}\varrho^p \big]$ is included in $\mathcal{Q}$.
\end{lemma}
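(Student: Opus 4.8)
The plan is to reduce the statement, in the region where $u$ is bounded away from zero, to the known expansion of positivity for the singular parabolic $p$-Laplacian, and then to carry that proof over to \eqref{Eq:1:1} through the energy estimate of Proposition~\ref{Prop:2:1}. First I would treat super-solutions only (the sub-solution case follows by applying it to $-u$), normalise $(x_o,t_o)=(0,0)$, and write $v:=u-\boldsymbol\mu^-\ge0$ and $\theta:=a^{2-p}\boldsymbol\om^{q+1-p}$, so that the goal becomes $v\ge a\eta\boldsymbol\om$ a.e.\ on $K_{2\varrho}\times\{\dl\theta\varrho^p\}$. The pivotal observation is a two-sided bound for $\mathfrak g_-$ along the levels that will occur. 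Since $c\boldsymbol\om\le-\boldsymbol\mu^-\le\Lm\boldsymbol\om$ and $a\le\tfrac12c$, on the set $\{v\le\tfrac12c\boldsymbol\om\}$ --- which contains the support of $(u-k)_-$ for every level $k$ with $0<k-\boldsymbol\mu^-\le\tfrac12c\boldsymbol\om$, hence for every level used below --- one has $\tfrac12c\boldsymbol\om\le|u|\le\Lm\boldsymbol\om$, and a direct computation from $\mathfrak g_-(u,k)=q\int_u^k|s|^{q-1}(k-s)\,\ds$ yields
\[
	\tfrac1{\boldsymbol\gm}\,\boldsymbol\om^{q-1}(u-k)_-^2\le\mathfrak g_-(u,k)\le\boldsymbol\gm\,\boldsymbol\om^{q-1}(u-k)_-^2,\qquad\boldsymbol\gm=\boldsymbol\gm(\text{data},c,\Lm).
\]
Thus, on that region the energy estimate of Proposition~\ref{Prop:2:1} coincides, up to constants depending only on the data, $c$ and $\Lm$, with the energy estimate for the singular $p$-Laplacian, the sole difference being the fixed weight $\boldsymbol\om^{q-1}$ in front of the time term; this is exactly what forces the time scale $\theta\varrho^p=\boldsymbol\om^{q-1}(a\boldsymbol\om)^{2-p}\varrho^p$, namely the singular $p$-Laplacian intrinsic scale for amplitude $a\boldsymbol\om$ weighted by $\boldsymbol\om^{q-1}$.

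With this reduction, the assertion is the expansion of positivity for the singular $p$-Laplacian, cf.\ \cite{ChenDB92} and \cite[Proposition~5.1, Chapter~4]{DBGV-mono}, and I would obtain it along its three standard steps, each deduced from Proposition~\ref{Prop:2:1} by the very computations underlying Lemmas~\ref{Lm:3:1}, \ref{Lm:3:2} and~\ref{Lm:DG:1}, with two adjustments: (i)~because $u$ is non-degenerate ($|u|\simeq\boldsymbol\om$) along all levels in play, $\mathfrak g_-$ is controlled directly via the two-sided bound above, so the dichotomies ``$|\boldsymbol\mu^{\pm}|>8\xi\boldsymbol\om$ or $\dots$'' that those lemmas carry in order to cope with the near-zero regime do not arise here, and the measure and pointwise conclusions hold unconditionally; (ii)~time is measured on the scale $\theta$ throughout. \emph{Step~1 (propagation in time).} Propagating the measure information at $t=0$ as in Lemma~\ref{Lm:3:1}, over the enlarged cube $K_{4\varrho}$ (on which the density is at least $\al\,4^{-N}$), produces $\eps_1,\dl_1\in(0,1)$, depending on the data, $c$, $\Lm$, $\al$, with $|\{v(\cdot,t)\ge\eps_1a\boldsymbol\om\}\cap K_{4\varrho}|\ge\tfrac{\al}{2\cdot4^N}|K_{4\varrho}|$ for all $t\in(0,\dl_1\theta\varrho^p]$. \emph{Step~2 (measure shrinking).} Running the argument of Lemma~\ref{Lm:3:2} with $B=4$ over $\widehat Q:=K_{4\varrho}\times(0,\dl_1\theta\varrho^p]$, but with the bound $\mathfrak g_-(u,k_j)\le\boldsymbol\gm\boldsymbol\om^{q-1}(u-k_j)_-^2$ in place of the one used there, gives, for every $j_*\in\mathbb N$,
\[
	\Big|\Big\{v\le\tfrac{\eps_1a\boldsymbol\om}{2^{j_*}}\Big\}\cap\widehat Q\Big|\le\frac{\boldsymbol\gm}{j_*^{(p-1)/p}}\,|\widehat Q|.
\]
\emph{Step~3 (De Giorgi).} Fixing $j_*$ --- hence $\eta$ proportional to $\eps_12^{-j_*}$ --- so large that the right-hand side drops below the critical fraction of the (non-degenerate form of the) De Giorgi Lemma~\ref{Lm:DG:1}, applied on a suitable cylinder seated at the top of $\widehat Q$, one concludes $v\ge a\eta\boldsymbol\om$ a.e.\ on $K_{2\varrho}\times\{\dl\theta\varrho^p\}$ for a suitable $\dl\in(0,1)$, which is the claim; the inclusion hypothesis on $K_{2\varrho}(x_o)\times(t_o,t_o+a^{2-p}\dl\boldsymbol\om^{q+1-p}\varrho^p]$ is what makes all these applications of Proposition~\ref{Prop:2:1} legitimate.

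The hard part will be Step~1: for the singular $p$-Laplacian ($1<p<2$) the time-propagation of positivity is the delicate ingredient of the whole scheme, and one must verify that it goes through on the fast-diffusion scale $\theta=a^{2-p}\boldsymbol\om^{q+1-p}$ rather than on the ``doubly nonlinear'' scale $(a\boldsymbol\om)^{q+1-p}$ that a literal use of Lemma~\ref{Lm:3:1} delivers --- and it is precisely there that the two-sided bound for $\mathfrak g_-$ has to be invoked to exchange one intrinsic scaling for the other. In addition, one must check that every level and cylinder appearing in the three steps stays inside $\{v\le\tfrac12c\boldsymbol\om\}$, so that the comparison for $\mathfrak g_-$ is genuinely available, and organise the dependence on $a$, $c$, $\Lm$ and $\theta$ so that the resulting $\dl$ and $\eta$ do not deteriorate as $a\downarrow0$; this careful balancing of the double singularity is the technical core of the argument.
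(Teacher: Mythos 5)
Your plan is genuinely different from the paper's, and for the most part it is sound, but you have traded a short structural reduction for a substantial re-derivation. The paper truncates at $k=\boldsymbol\mu^{-}+\tfrac12c\boldsymbol\om$, passes to $v:=\boldsymbol{u_k}^q-\boldsymbol{(\mu^-)}^q$ (so that $\partial_t v$ is \emph{linear}), checks that $v$ solves an equation of $p$-Laplacian type whose structure constants carry a factor $\boldsymbol\om^{(q-1)(1-p)}$, removes that factor by the time rescaling $\widetilde v(x,t)=v(x,\boldsymbol\om^{(q-1)(p-1)}t)$, and then invokes the singular expansion of positivity \cite[Proposition~5.1, Chapter~5]{DBGV-mono} as a black box. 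The two routes land on the same intrinsic time scale: on $\{u\le k\}$ one has $|u|\simeq\boldsymbol\om$, so the amplitude of $v$ corresponding to a drop $a\boldsymbol\om$ in $u$ is of order $\boldsymbol\om^{q-1}\cdot a\boldsymbol\om$; the singular $p$-Laplacian scale $(\cdot)^{2-p}$ for that amplitude, after the $\boldsymbol\om^{(q-1)(p-1)}$ reversal, is exactly your $a^{2-p}\boldsymbol\om^{q+1-p}$, which is what your two-sided estimate $\mathfrak g_-(u,k)\simeq\boldsymbol\om^{q-1}(u-k)_-^2$ also forces. That estimate is correct on the range of levels in play, and your identification of the time scale is the right observation. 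What you gain is a proof that stays entirely at the level of Proposition~\ref{Prop:2:1} and avoids verifying the function-space membership and structure conditions for the transformed equation; what you give up is that you must re-prove the singular expansion of positivity, rather than cite it. That is not a small savings: Steps 1--3 (propagation in time, measure shrinking, De Giorgi) each must be rerun with the constant-weight form of the energy inequality, and, as you yourself note, the literal Lemmas~\ref{Lm:3:1}--\ref{Lm:3:2} deliver the doubly nonlinear scale $(a\boldsymbol\om)^{q+1-p}$ rather than $a^{2-p}\boldsymbol\om^{q+1-p}$, so they cannot be used as stated but only as templates. There is no logical gap in the outline, but you should be aware it amounts to reproducing \cite[Proposition~5.1, Chapter~5]{DBGV-mono} wholesale in the doubly nonlinear setting, and the paper deliberately avoids exactly that. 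One more point worth making explicit: your argument tacitly uses that every level $k_j$ in Steps 2--3 satisfies $0<k_j-\boldsymbol\mu^-\le\tfrac12 c\boldsymbol\om$; this is where the hypothesis $a\le\tfrac12 c$ enters and why the paper introduces the truncation $u_k$ at the outset, a device you should also invoke (via the parabolicity of $\min\{u,k\}$) so that the De Giorgi alternative ``$|\boldsymbol\mu^{\pm}|>8\xi\boldsymbol\om$'' really is vacuous and your two-sided bound never leaves its domain of validity.
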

\begin{proof}
We may assume $(x_o,t_o)=(0,0)$ and prove the case of super-solutions only as the other case is similar.
Let $k=\boldsymbol\mu^-+\frac12c\boldsymbol\om$.
By the notion of parabolicity, cf.~\cite[Appendix~A]{BDL} and \cite[Appendix~A]{BDLS-1},  
$u_k:=\min\{u, k\}$ is a local, weak super-solution to \eqref{Eq:1:1}, i.e.
\[
\pl_t \boldsymbol{u_k}^q-\dvg {\bf A}(x,t,u_k,Du_k)\ge0\quad\text{ weakly in }\mathcal{Q}.
\]
To proceed, we define
\[
v:=\boldsymbol{u_k}^q-\boldsymbol{(\mu^-)}^q,
\]
which is non-negative in $\mathcal{Q}$.
Employing the assumption on $\boldsymbol{\mu^-}$, it is straightforward to show that $v$ belongs to the function space \eqref{Eq:1:3p}$_{q=1}$.
Further, $v$ satisfies the equation
\[
v_t-\dvg{\bf \bar{A}}(x,t,v,Dv)\ge0\quad\text{ weakly in }\mathcal{Q}.
\]
Here ${\bf \bar{A}}$ is defined by
\[
	{\bf \bar{A}}(x,t,y,\zeta)
	:=
	{\bf A}\Big(x,t,\big| \widetilde{y} + \boldsymbol{(\mu^-)}^q\big|^{\frac{1-q}q}\big( \widetilde{y} + \boldsymbol{(\mu^-)}^q\big),
	\tfrac1q \big| \widetilde{y}+\boldsymbol{(\mu^-)}^q\big|^\frac{1-q}q \zeta\Big),
\]
where $\widetilde{y}$ denotes the truncation
$$
	\widetilde{y}
	:=
	\min\big\{ \max\{ y, 0 \}, \big(1-\tfrac{1}{2^q}\big) (c\boldsymbol\omega)^q \big\}.
$$
Meanwhile, one verifies that the structure conditions
\begin{equation*}
	\left\{
	\begin{array}{c}
		{\bf \bar{A}}(x,t,y,\zeta)\cdot \zeta\ge\overline C_o\boldsymbol\omega^{(q-1)(1-p)}|\zeta|^p \\[5pt]
		|{\bf \bar{A}}(x,t,y,\zeta)|\le \overline{C}_1 \boldsymbol\omega^{(q-1)(1-p)} |\zeta|^{p-1},%
	\end{array}
	\right .
\end{equation*}
with positive constants $\overline C_i =\overline C_i(C_i,p,q,c,\Lambda),\, i=0,1$.

If we further define 
\[
\widetilde{v}(x,t):=v(x,\boldsymbol\om^{(q-1)(p-1)}t),
\]
then it satisfies
\[
\widetilde{v}_t-\dvg{\bf \widetilde{A}}(x,t,\widetilde{v},D\widetilde{v})\ge0\quad\text{ weakly in }
K \times (\boldsymbol \omega^{(q-1)(1-p)} T_1, \boldsymbol \omega^{(q-1)(1-p)} T_2),
\]
where ${\bf \widetilde{A}}(x,t,v,\zeta) := \boldsymbol \omega^{(q-1)(p-1)} {\bf \bar{A}}(x, \boldsymbol \omega^{(q-1)(p-1)} t, v, \zeta)$ is subject to the conditions
\begin{equation*}
	\left\{
	\begin{array}{c}
		{\bf \widetilde{A}}(x,t,v,\zeta)\cdot Dv\ge \bar{C}_o|\zeta|^p, \\[5pt]
		|{\bf \widetilde{A}}(x,t,v,\zeta)|\le \bar{C}_1|\zeta|^{p-1}.%
	\end{array}
	\right .
\end{equation*}
In other words, the function $\widetilde{v}$ is a non-negative super-solution
to the parabolic $p$-Laplacian type equation. This allows us to apply the expansion of positivity
in  \cite[Proposition~5.1, Chapter~5]{DBGV-mono}.

By the mean value theorem, the given measure theoretical information in terms of $\widetilde{v}$ becomes
	\begin{equation*}
		\Big|\Big\{\widetilde{v}(\cdot, 0)\ge \widetilde{a}\boldsymbol\om^q\Big\}\cap K_{\varrho}\Big|
		\ge
		\al \big|K_\varrho\big|,
		\label{eq:transformed info}
	\end{equation*}
where $\widetilde{a}=a\widetilde{\boldsymbol\gm}$ for some positive $\widetilde{\boldsymbol\gm}=\widetilde{\boldsymbol\gm}(q,c,\Lm)$.
An application of \cite[Proposition~5.1, Chapter~5]{DBGV-mono} to $\widetilde{v}$ yields that for some positive $\eta$ and $\dl$
 depending only on the data, $\al$, $c$ and $\Lm$, such that
\[
\widetilde{v}\ge\eta \widetilde{a} \boldsymbol\om^q\quad\text{ a.e. in }K_{2\rho}\times\big\{ \dl (\widetilde{a}\boldsymbol\om^q)^{2-p} \rho^p\big\}.
\]
Reverting to the original function $u$ with the aid of the mean value theorem, we obtain that
$$
	u - \boldsymbol\mu^-
	\geq
	\widetilde{\boldsymbol\gm} \eta a \boldsymbol \omega
	\quad \text{a.e.~in }
	K_{2 \rho} \times\big\{ \dl (a\widetilde{\boldsymbol\gm})^{2-p} \boldsymbol \omega^{q+1-p} \rho^p \big\}.
$$
Redefining $\widetilde{\boldsymbol\gm} \eta$ as $\eta$ and $\dl \widetilde{\boldsymbol\gm}^{2-p}$ as $\dl$, we may conclude.
\end{proof}

\begin{remark}
\label{rem:kappa}\upshape
Observe that we may replace $a$ in Lemma~\ref{Lm:expansion:p} by $\kappa a$ for any $\kappa\in(0,1)$.
As a result, under the same hypotheses, we obtain $\pm \big( \boldsymbol \mu^\pm - u(\cdot, t) \big) \ge \kappa a \eta \boldsymbol \om$ a.e. in $K_{2\rho}$ for all times
\[
t\in \big[ t_o+  \kappa^{2-p} a^{2-p} \dl \boldsymbol\om^{q+1-p}\varrho^p , t_o+  a^{2-p} \dl \boldsymbol\om^{q+1-p}\varrho^p\big].
\]
This means the pointwise positivity can be claimed as close to $t_o$ as we want.
\end{remark}
\section{Proof of Theorem \ref{Thm:1:1}}\label{S:5}
\subsection{The proof begins} 
Assume without loss of generality that $(x_o,t_o)=(0,0)$,
introduce $Q_o=Q(\rho^{\frac12}, \rho^{p})$  and
set
\begin{equation*}
	\boldsymbol \mu^+=\essup_{Q_o}u,
	\quad
	\boldsymbol\mu^-=\essinf_{Q_o}u,
	\quad
	\boldsymbol\om \geq \boldsymbol\mu^+-\boldsymbol\mu^-.
\end{equation*}
Let $\theta=(\frac14\boldsymbol\om)^{q+1-p}$ and consider a radius $\rho$ such that $0 < \rho\le A^{-2}$ for some $A>1$ to be determined in terms of the data.
At this stage, we may assume that $\boldsymbol\om\le 1$. Then the following intrinsic relation holds true:
\begin{equation}\label{Eq:start-cylinder}
Q(A\rho, \theta\rho^p)\subset Q_o,\quad\text{ such that }\quad \essosc_{Q(A\rho, \theta\rho^p)}u\le\boldsymbol\om.
\end{equation}

Our proof unfolds along two main
cases, namely for some $\xi\in(0,1)$ to be determined,
\begin{equation}\label{Eq:Hp-main}
\left\{
\begin{array}{c}
\mbox{when $u$ is near zero:  $\boldsymbol\mu^-\le\xi\boldsymbol\om$ and 
$\boldsymbol\mu^+\ge-\xi\boldsymbol\om$};\\[5pt]
\mbox{when $u$ is away from zero: $\boldsymbol\mu^->\xi\boldsymbol\om$ or $\boldsymbol\mu^+<-\xi\boldsymbol\om$.}
\end{array}\right.
\end{equation}
Note that \eqref{Eq:Hp-main}$_1$ implies that  
$|\boldsymbol\mu^\pm|\le2\boldsymbol\om$.
When this case  holds, we deal with it next in Sections~\ref{S:case-1} -- \ref{S:case-1-3};
the other case will be treated in Section~\ref{S:case-2}.
\subsection{Reduction of oscillation near zero--Part I}\label{S:case-1}
In this section, we will assume that \eqref{Eq:Hp-main}$_1$ holds true.
To proceed, we may further observe that
one of the following must hold true:
Either we have that $  \boldsymbol\mu^+-\boldsymbol\mu^- \leq \tfrac12 \boldsymbol \omega$, which will be considered later on, or $\boldsymbol\mu^+-\boldsymbol\mu^- > \tfrac12 \boldsymbol \omega $, which we assume in Sections~\ref{S:case-1} -- \ref{S:case-1-2}.
Consequently, there holds
\begin{equation}\label{Eq:mu-pm}
\boldsymbol\mu^+-\tfrac18\boldsymbol\om\ge\tfrac18\boldsymbol\om\quad\text{ or }\quad \boldsymbol\mu^-+\tfrac18\boldsymbol\om\le-\tfrac18\boldsymbol\om.
\end{equation}
Let us take for instance the second one, i.e.~\eqref{Eq:mu-pm}$_2$, as the other one can be treated analogously.
Hence we may assume that $ -2\boldsymbol\om\le\boldsymbol\mu^-\le -\frac14\boldsymbol\om$ and Lemma~\ref{Lm:expansion:p} is at our disposal.

Suppose $A$ is a large integer with no loss of generality, and divide $Q(A\rho, \theta\rho^p)$ into $A^N$ disjoint but adjacent sub-cylinders
$$Q_i:=K_{\rho}(x_i)\times(- \theta\rho^p,0]\quad\text{ for }i=1,\cdots, A^N.$$
One of the following two alternatives must hold true:
\begin{equation}\label{Eq:alternative-int}
\left\{
\begin{array}{ll}
\Big|\Big\{u\le\boldsymbol\mu^-+\tfrac14\boldsymbol\om\Big\}\cap Q_i\Big|\le \nu|Q_i|\quad\text{ for some }i\in\big\{1,\cdots, A^N\big\},\\[5pt]
\Big|\Big\{u\le\boldsymbol\mu^-+\tfrac14\boldsymbol\om\Big\}\cap Q_i\Big|> \nu |Q_i|\quad\text{ for any }i\in\big\{1,\cdots, A^N\big\}.
\end{array}\right.
\end{equation}
Here the number $\nu\in(0,1)$ is determined in Lemma~\ref{Lm:DG:1} in terms of the data.

First suppose \eqref{Eq:alternative-int}$_1$ holds true for some $i$. An application of Lemma~\ref{Lm:DG:1} (with $\xi=\frac14$)
gives us that, recalling $|\boldsymbol\mu^-|\le2\boldsymbol\om$ due to \eqref{Eq:Hp-main}$_1$,
\begin{equation}\label{Eq:initial-condi}
u\ge \boldsymbol\mu^-+\tfrac18\boldsymbol\om\quad\text{ a.e. in }\tfrac12Q_i.
\end{equation}
Here the notation $\tfrac12Q_i$ should be self-explanatory in view of Lemma~\ref{Lm:DG:1}.
Recalling that we have assumed $ -2\boldsymbol\om\le\boldsymbol\mu^-\le -\frac14\boldsymbol\om$,
after $n+1$ applications of Lemma~\ref{Lm:expansion:p}, $n \in \nn$,
with $a = \frac{1}{8} \kappa \eta^{i-1}$ in the $i$-th step, $i=1, \ldots, n+1$,
the pointwise information \eqref{Eq:initial-condi} yields
$$
	u \ge \boldsymbol\mu^- + \tfrac18 \kappa \eta^{n+1} \boldsymbol\om
$$
a.e.~in
$$
	K_{2^n \rho}(x_i) \times
	\Big(-\theta \big( \tfrac{1}{2}\rho \big)^p
	+ \big( \tfrac18 \kappa\big)^{2-p} \delta \boldsymbol \omega^{q+1-p} \sum_{j=0}^n \eta^{(2-p)j} 2^{(j-1) p} \rho^p,
	0 \Big],
$$
where $\kappa \in (0,1)$ is a parameter to be chosen and 
$\delta,\,\eta \in (0,1)$ depend only on the data.
Now, we take $n$ so large that $2^n\ge2A$.
Then we choose $\kappa$ so small that
$$
	-\theta \left( \tfrac{1}{2}\rho \right)^p
	+ \big( \tfrac18 \kappa\big)^{2-p} \delta \boldsymbol \omega^{q+1-p} \sum_{j=0}^n 2^{(j-1) p} \rho^p
	\leq
	-  \theta \big(\tfrac14\rho\big)^p,
	\quad \text{ i.e. }\quad
	\kappa^{2-p}
	\leq
	\boldsymbol\gm(n, p, q) \dl^{-1}.
$$
This fixes $\eta_1 := \tfrac18 \kappa \eta^{n+1}$ depending only on the data and $A$. Moreover, we obtain that
\[
u\ge\boldsymbol\mu^-+\eta_1\boldsymbol\om\quad\text{ a.e. in }\widetilde{Q}=K_{A\rho}\times \big(- \theta\big(\tfrac14\rho\big)^p,0\big],
\]
which in turn yields a reduction of oscillation
\begin{equation}\label{Eq:reduc-osc-int-1}
\essosc_{\widetilde{Q}}u\le (1-\eta_1)\boldsymbol\om.
\end{equation}
Keep in mind that $A>1$ is yet to be determined in terms of the data.

\subsection{Reduction of oscillation near zero--Part II}\label{S:case-1-2}
In this section, we still assume that \eqref{Eq:Hp-main}$_1$ holds true.
However, we turn our attention to the second alternative \eqref{Eq:alternative-int}$_2$.
Observing that $\boldsymbol\mu^+-\frac14\boldsymbol\om\ge\boldsymbol\mu^-+\frac14\boldsymbol\om$ due to our assumption for Sections~\ref{S:case-1} -- \ref{S:case-1-2},
  we may rephrase \eqref{Eq:alternative-int}$_2$ as
\[
\Big|\Big\{\boldsymbol\mu^+-u\ge\tfrac14\boldsymbol\om\Big\}\cap Q_i\Big|> \nu |Q_i|\quad\text{ for any }i\in\big\{1,\cdots, A^N\big\}.
\]
Letting $B\le A$ be another positive integer to be determined, we combine $B^N$ adjacent blocks of $\{Q_i\}$ to obtain that
\begin{equation}\label{Eq:measure-B}
\Big|\Big\{\boldsymbol\mu^+-u\ge\tfrac14\boldsymbol\om\Big\}\cap Q(B\rho, \theta\rho^p)\Big|> \nu |Q(B\rho, \theta\rho^p)|.
\end{equation}
Here we have  omitted the vertex of $Q(B\rho, \theta\rho^p)$ for simplicity
while bearing in mind that it is arbitrary as long as such a cylinder is included in $Q(A\rho, \theta\rho^p)$.

Then from \eqref{Eq:measure-B} we find some 
$$t_*\in \big[-\theta\rho^p, - \tfrac12 \nu\theta\rho^p\big]$$ 
such that
\begin{equation*}
\Big|\Big\{\boldsymbol\mu^+-u(\cdot, t_*)\ge\tfrac14\boldsymbol\om\Big\}\cap K_{B\rho}\Big|>\tfrac12\nu |K_{B\rho}|,
\end{equation*}
since otherwise we could compute
\begin{align*}
	\Big|\Big\{\boldsymbol\mu^+-u\ge\tfrac14\boldsymbol\om\Big\}
	&\cap Q(B\rho, \theta\rho^p)\Big| \\
	&=
	\int_{-\theta \rho^p}^{-\frac{1}{2} \nu \theta \rho^p}
	\Big|\Big\{\boldsymbol\mu^+-u(\cdot, t) \ge\tfrac14\boldsymbol\om\Big\}
	\cap K_{B \rho} \Big| \,\dx\dt \\
	&\hphantom{=}
	+\int_{-\frac{1}{2} \nu \theta \rho^p}^0
	\Big|\Big\{\boldsymbol\mu^+-u(\cdot, t) \ge\tfrac14\boldsymbol\om\Big\}
	\cap K_{B \rho} \Big| \,\dx\dt \\
	&\leq
	\tfrac{1}{2} \nu \Big(\theta \rho^p - \tfrac12 \nu \theta \rho^p \Big) |K_{B \rho}|
	+\tfrac12 \nu \theta \rho^p |K_{B\rho}| \\
	&<
	\nu |Q(B\rho, \theta\rho^p)|
\end{align*}
in contradiction to \eqref{Eq:measure-B}.
Note again that the center of the cube $K_{B\rho}$ is omitted.
This serves as the initial measure information for an application of Lemma~\ref{Lm:3:1} with $\xi=\frac14$.
Indeed, keeping in mind that $|\boldsymbol\mu^+|\le2\boldsymbol\om$,
there exist $\dl_1$ and $\varep$ in $ (0,1)$ depending on the data and $\nu$, such that
\begin{equation}\label{Eq:measure}
\Big|\Big\{\boldsymbol\mu^+-u(\cdot, t)\ge\tfrac14\varep\boldsymbol\om\Big\}\cap K_{B\rho}\Big|>\tfrac14\nu |K_{B\rho}|
\end{equation}
for all times
\[
t_*\le t\le t_*+\dl_1\theta (B\rho)^p.
\]
If we choose the integer $ B \geq \dl_1^{-\frac1{p}}$, then \eqref{Eq:measure} holds for all time
\[
- \tfrac12 \nu\theta\rho^p\le t\le 0.
\]

Next, the measure information in \eqref{Eq:measure} allows us to apply Lemma~\ref{Lm:3:2}.
Indeed, for some $\boldsymbol\gm>1$ and $\widetilde\varep\in(0,1)$ depending on the data (remembering that
all parameters $\{\nu,\dl_1,\varep,B\}$ have been determined by the data), and for all $j_*\in\nn$, there holds
either $|\boldsymbol\mu^{+}|>\widetilde\varep \varep\boldsymbol \om 2^{-j_*-2}$ or
\begin{equation*}
	\bigg|\bigg\{
	\boldsymbol \mu^{+}-u\le\frac{\widetilde\varep\varep\boldsymbol \om}{2^{j_*+2}}\bigg\}\cap \widehat Q_B\bigg|
	\le\frac{\boldsymbol\gm}{j_*^{\frac{p-1}p}}|\widehat Q_B|,
\end{equation*}
where we have set
\[
\widehat{Q}_B=K_{B\rho}\times\big(-\tfrac12\nu\theta\rho^p,0\big].
\]
We may assume that $A_1:=L_1B\le A$ for some large integer $L_1$ to be determined.
By the arbitrariness of the vertex of $\widehat{Q}_B$, we may combine $L_1^N$
disjoint but adjacent blocks of $\widehat{Q}_B$ that make up $\widehat{Q}_{A_1}$,
 where we have set
\[
\widehat{Q}_{A_1}=K_{A_1\rho}\times\big(-\tfrac12\nu\theta\rho^p,0\big].
\]
Note carefully that now the vertex of $\widehat{Q}_{A_1}$ is fixed at the origin.
 Then the above measure estimate actually yields that
 either $|\boldsymbol\mu^{+}|>\widetilde\varep \varep\boldsymbol \om 2^{-j_*-2}$ or
\begin{equation*}
	\bigg|\bigg\{
	\boldsymbol \mu^{+}-u\le\frac{\widetilde\varep\eps\boldsymbol \om}{2^{j_*+2}}\bigg\}\cap \widehat Q_{A_1}\bigg|
	\le\frac{\boldsymbol\gm}{j_*^{\frac{p-1}p}}|\widehat Q_{A_1}|.
\end{equation*}

From this measure estimate, the last step in order to reduce the oscillation in this case lies in applying Lemma~\ref{Lm:DG:1}.
In fact, fixing $\nu$ as in Lemma~\ref{Lm:DG:1}, we first choose $j_*$ to satisfy that $$\boldsymbol\gm j_*^{-\frac{p-1}p}\le\nu.$$
Then we choose $A_1$ (and hence $L_1$) to fulfill the intrinsic relation required by Lemma~\ref{Lm:DG:1}:
\begin{equation}\label{Eq:choice-A-1}
\Big(\frac{\widetilde\varep\eps\boldsymbol \om}{2^{j_*+2}}\Big)^{q+1-p}(A_1\rho)^p=\tfrac12\nu\theta\rho^p,\quad\text{ i.e. }\quad
A_1^p=(L_1B)^p=\tfrac12\nu\Big(\frac{\widetilde\varep\eps}{2^{j_*+2}}\Big)^{p-q-1}.
\end{equation}
As a result, if we set
\begin{equation}
	\label{Eq:choice-xi}
	\xi=\widetilde\varep\varep 2^{-j_*-2},
\end{equation}
and impose $|\boldsymbol \mu^{+}|\le\xi\boldsymbol \om$,
then Lemma~\ref{Lm:DG:1} implies that
\[
\boldsymbol \mu^{+}-u\ge\tfrac12\xi\boldsymbol \om\quad\text{ a.e. in }\tfrac12\widehat{Q}_{A_1},
\]
which in turn gives that
\begin{equation}\label{Eq:reduc-osc-int-2}
\essosc_{\frac12\widehat{Q}_{A_1}}u\le (1-\tfrac12\xi)\boldsymbol \om.
\end{equation}
Here the notation $\tfrac12\widehat{Q}_{A_1}$ should be self-explanatory in view of Lemma~\ref{Lm:DG:1}.

The case $\boldsymbol \mu^{+}\le-\xi\boldsymbol \om$ will be treated in Section~\ref{S:case-2},
whereas the case $\xi\boldsymbol \om\le\boldsymbol \mu^{+}\le 2\boldsymbol \om$ is handled with the aid of 
Lemma~\ref{Lm:expansion:p}. In fact, let $A_o:=L_oB\le A$ for some large integer $L_o$
and recall the measure information \eqref{Eq:measure-B} in $Q_\rho(\theta\rho^p, B\rho)$. 
By the arbitrariness of the vertex of $Q(B\rho, \theta\rho^p)$, we may combine $L_o^N$
disjoint but adjacent blocks of $Q(B\rho, \theta\rho^p)$ that make up $Q(A_o\rho, \theta\rho^p)$.
Keep in mind that now the vertex of $Q(A_o\rho, \theta\rho^p)$ is fixed at the origin.
Then the measure information \eqref{Eq:measure-B} may be rephrased in $Q(A_o\rho, \theta\rho^p)$
and  consequently 
 it is not hard to see that there exists 
$$t_*\in \big[-\theta\rho^p, - \tfrac34 \nu\theta\rho^p\big],$$ 
such that
\begin{equation*}
\Big|\Big\{\boldsymbol\mu^+-u(\cdot, t_*)\ge\tfrac14\boldsymbol\om\Big\}\cap K_{A_o\rho}\Big|>\tfrac14 \nu |K_{A_o\rho}|.
\end{equation*}
This measure information allows us to apply Lemma~\ref{Lm:expansion:p} with the choice $a = \frac{\xi}{2} \leq \xi = c$ and Remark \ref{rem:kappa}.
Indeed, there exist positive $\dl_o$ and $\eta_o$ depending on the data, $\xi$ and $\nu$,
such that for any $\kappa\in(0,1)$ we have
\[
\boldsymbol\mu^+-u\ge \kappa \eta_o\boldsymbol\om\quad\text{ a.e. in }
K_{A_o\rho}\times\big(t_*+\kappa^{2-p} \dl_o\boldsymbol\om^{q+1-p}(A_o\rho)^p, t_*+\dl_o\boldsymbol\om^{q+1-p}(A_o\rho)^p\big].
\]
We select the integer $A_o$ (and hence $L_o$) such that
\begin{equation}\label{Eq:choice-A-2}
	(\tfrac14 \boldsymbol\om)^{q+1-p}\rho^p\le\dl_o\boldsymbol\om^{q+1-p}(A_o\rho)^p,
	\quad\text{ i.e. }\quad
	A_o^p=(L_oB)^p \geq \dl_o^{-1}4^{p-q-1}.
\end{equation}
Bearing in mind that the choice of $A_1$ in \eqref{Eq:choice-A-1} and  $A_o$ in \eqref{Eq:choice-A-2}
can be made even larger by properly adjusting relevant parameters that determine them,
the final choice of $A$ is the larger one of $A_o$ and $A_1$.
This remark tacitly used the fact that $0<p-1<q$ and $1<p<2$.
Next, we choose $\kappa$ such that
$$
	- \tfrac34 \nu\theta\rho^p + \kappa^{2-p} \dl_o\boldsymbol\om^{q+1-p}(A_o\rho)^p
	\leq
	-\tfrac12 \nu \theta \rho^p,
	\quad \text{ i.e. } \quad
	\kappa^{2-p} = \frac{\nu}{4^{q+2-p} \delta_o A_o^p}.
$$
Note that $\kappa$ depends only on the data, since $\nu$, $\delta_o$ and $A_o$ have already been determined by the data. 
As a result, we arrive at 
\begin{equation}\label{Eq:reduc-osc-int-3}
\essosc_{\widehat{Q}_{A_o}}u\le (1-\kappa\eta_o)\boldsymbol\om,
\end{equation}
 where we have set
\[
\widehat{Q}_{A_o}=K_{A_o\rho}\times\big(-\tfrac12\nu\theta\rho^p,0\big].
\]

To summarize, let us define 
\[
\eta=\min\big\{\eta_1,\,\kappa\eta_o,\,\tfrac12\xi\big\}, 
\]
where
 $\eta_1$ is as in \eqref{Eq:reduc-osc-int-1},
$\kappa\eta_o$ is as in \eqref{Eq:reduc-osc-int-3} and $\frac12\xi$ is as in \eqref{Eq:reduc-osc-int-2}. 
Combining \eqref{Eq:reduc-osc-int-1}, \eqref{Eq:reduc-osc-int-3} and \eqref{Eq:reduc-osc-int-2} and taking into account the violation of \eqref{Eq:mu-pm}, that is, $  \boldsymbol\mu^+-\boldsymbol\mu^- \leq \tfrac12 \boldsymbol \omega$, gives
the reduction of oscillation
 \begin{equation}\label{Eq:reduc-osc-int-4}
\essosc_{Q_{\frac14\rho}(\nu\theta)}u\le (1-\eta)\boldsymbol\om,
 \end{equation}
 provided the intrinsic relation \eqref{Eq:start-cylinder} is verified.

In order to iterate the above argument, we introduce
\[
\boldsymbol\om_1=(1-\eta)\boldsymbol\om; 
\]
we need to choose $\rho_1=\lm\rho$ for some $\lm\in(0,1)$, such that
\[
Q(\theta_1\rho_1^p, A\rho_1)\subset Q_{\frac14\rho}(\nu\theta),
\quad\text{ where }\theta_1=(\tfrac14\boldsymbol\om_1)^{q+1-p}.
\]
This amounts to requiring that
\[
\theta_1\rho_1^p\le \big(\tfrac14 \boldsymbol\om\big)^{q+1-p}(\lm\rho)^p\le \nu(\tfrac14\boldsymbol\om)^{q+1-p}(\tfrac14\rho)^p,
\quad A\rho_1=A\lm\rho\le\tfrac14\rho;
\]
consequently, the  set inclusion holds if we take
\[
\lm= \min\Big\{\tfrac1{4}A^{-1},\,\tfrac14  \nu^{\frac{1}p}\Big\}.
\]
Therefore,  by this set inclusion and \eqref{Eq:reduc-osc-int-4}, 
we have arrived at the intrinsic relation
\[
\essosc_{Q(A\rho_1, \theta_1\rho_1^p)}u\le \boldsymbol\om_1,
\]
which takes the place of \eqref{Eq:start-cylinder} in the next stage.

\subsection{Reduction of oscillation near zero concluded}\label{S:case-1-3}
Now we may proceed by induction. 
Suppose that, up to $i=1,2,\cdots j-1$, we have built
\begin{equation*}
\left\{
	\begin{array}{c}
	\rho_o=\rho,
	\quad
	\dsty\varrho_i=\lm\varrho_{i-1},
	\quad
	\theta_i=\big(\tfrac14\boldsymbol\om_i\big)^{q+1-p}\\[5pt]
	\boldsymbol\om_o=\boldsymbol\om,
	\quad
	\boldsymbol\om_i= (1-\eta)\boldsymbol\om_{i-1},\\[5pt]
	Q_i=Q(A\rho_i, \theta_i\rho_i^p),
	\quad
	Q'_i=Q_{\frac14\rho_i}(\nu\theta_i)\\[5pt]
	\dsty\boldsymbol\mu_i^+=\essup_{Q_i}u,
	\quad
	\boldsymbol\mu_i^-=\essinf_{Q_i}u,
	\quad
	\essosc_{Q_i}u\le\boldsymbol\om_i.
	\end{array}
\right.
\end{equation*}
For all the indices $i=1,2,\cdots j-1$, we alway assume that  \eqref{Eq:Hp-main}$_1$ holds, i.e.,
$$
	\boldsymbol\mu_i^-\le\xi\boldsymbol\om_i\quad
	\text{ and }\quad\boldsymbol\mu_i^+\ge-\xi\boldsymbol\om_i.
$$
In this way, one can repeat the argument at the beginning  and conclude for all $i=1,2,\cdots j$,
\begin{equation}
	Q_i\subset Q'_{i-1},
	\quad
	\essosc_{Q_i} u \le (1-\eta)\boldsymbol\om_{i-1} = \boldsymbol\om_i.
	\label{Eq:case1}
\end{equation}
\subsection{Reduction of Oscillation Away From Zero}\label{S:case-2}
In this section we assume that there exists an index $j$ such that the second case in \eqref{Eq:Hp-main} is satisfied, i.e.
\[
\text{either \quad$\boldsymbol\mu_j^-> \xi\boldsymbol \om_j$ \quad or\quad $\boldsymbol\mu_j^+< -\xi\boldsymbol \om_j$.}
\]
Since the other case is analogous, we only treat $\boldsymbol\mu_j^->\xi\boldsymbol\om_j$.
If we assume that $j$ is the first index fulfilling \eqref{Eq:Hp-main}$_2$, we have that $\boldsymbol\mu_{j-1}^-\le \xi\boldsymbol \om_{j-1}$.
Therefore, since $Q_j \subset Q_{j-1}$ and by definition of the essential supremum we obtain
\[
	\boldsymbol\mu_{j}^-
	\leq
	\boldsymbol \mu_j^+
	\leq
	\boldsymbol\mu_{j-1}^+
	\leq
	\boldsymbol\mu_{j-1}^- + \boldsymbol \om_{j-1}
	\leq
	(1 + \xi) \boldsymbol \omega_{j-1}
	\leq
	\frac{1 + \xi}{1 - \eta} \boldsymbol \om_{j}.
\]
Consequently, we may estimate
\begin{equation}\label{Eq:6:9}
\xi\boldsymbol \om_{j}\le\boldsymbol \mu_{j}^-\le\frac{1+\xi}{1-\eta}\boldsymbol \om_{j}.
\end{equation}
Hence, $u$ is bounded away from zero in $Q_j$ and thus starting from $j$, the equation \eqref{Eq:1:1} resembles the parabolic $p$-Laplacian type equation in $Q_j$.
In the following, we drop the index $j$ from our notation for simplicity,
and introduce $v:=u/\boldsymbol\mu^-$ in $Q\equiv Q(A \rho, \theta \rho^p)$.
It is straightforward to verify that $v$ satisfies
\begin{equation}\label{Eq:6:10}
1\le v\le\frac{1+\xi}{\xi}\quad\text{ a.e. in }Q
\end{equation}
and solves the equation
\begin{equation*}
	\pl_tv^{q}-\dvg\bar{\bl{A}}(x,t,v, Dv)=0\quad\text{ weakly in }Q,
\end{equation*}
where, for $ (x,t)\in Q$, $y \in\rr$ and $\z\in\rn$, we have defined 
$$
	\bar{\bl{A}}(x,t,y, \z)
	=
	 \bl{A}(x,t,\boldsymbol\mu^- y, \boldsymbol\mu^- \z) /(\boldsymbol\mu^-)^{q},
$$
which is subject to the structure conditions
\begin{equation*}
	\left\{
	\begin{array}{c}
		\bar{\bl{A}}(x,t,y,\z) \cdot \z \ge C_o (\boldsymbol \mu^-)^{p-q-1} |\z|^p \\[5pt]
		|\bar{\bl{A}}(x,t,y,\z)| \le C_1 (\boldsymbol \mu^-)^{p-q-1} |\z|^{p-1}
	\end{array}
	\right .
	\qquad \mbox{ for a.e.~$(x,t)\in Q$, $\forall\, y\in\rr$, $\forall\, \z\in\rn$.}
\end{equation*}
Next, we consider the equation satisfied by $w:=v^{q}$, i.e.
\begin{equation*}
\partial_tw-\dvg\widetilde{\bl{A}}(x,t,w, Dw)=0\quad\text{ weakly in }Q,
\end{equation*}
where the vector-field $\widetilde{\bf A}$ is defined by
\[
	\widetilde{\bl{A}}(x,t,y,\zeta)
	=\
	\bar{\bl{A}}\Big( x,t,\widetilde y^{\frac{1}{q}},\tfrac{1}{q} \widetilde y^{\frac{1-q}{q}}
	\zeta\Big),
\]
for a.e.~$(x,t)\in Q$, any $y \in\rr$ and any $\zeta\in\rn$.
Here, $\widetilde y$ is defined by
\[
	\widetilde y :=
	\min\Big\{ \max\big\{y,\tfrac12\big\}, 2\Big(\frac{1+\xi}{\xi}\Big)^q \Big\}.
\]
Due to \eqref{Eq:6:10}, $w$ is contained in the function space \eqref{Eq:1:3p}$_{q=1}$.
Furthermore, taking into account \eqref{Eq:6:10}, a straightforward calculation shows that there exist positive constants $\widetilde{C}_o=\boldsymbol\gamma_o (p,q,\xi)C_o$ and $\widetilde{C}_1= \boldsymbol\gamma_1 (p,q,\xi)C_1$, such that
\begin{equation*}
		\widetilde{\bl{A}}(x,t,y,\zeta)\cdot \zeta
		\geq
		\widetilde{C}_o (\boldsymbol \mu^-)^{p-q-1} |\zeta|^p 
		\quad\mbox{and}
		\quad
		|\widetilde{\bl{A}}(x,t,y,\zeta)|
		\leq
		\widetilde{C}_1 (\boldsymbol \mu^-)^{p-q-1} |\zeta|^{p-1},
\end{equation*}
for a.e.~$(x,t)\in Q$, any $y\in\rr$, and any $\zeta\in\rn$.
Keep in mind that $\xi$ is already fixed in \eqref{Eq:choice-xi} in dependence of the data.
In order to eliminate $\boldsymbol \mu^-$ in the preceding structure conditions, define
$$
	\widehat{w}(x,t) := w(x, (\boldsymbol \mu^-)^{q+1-p} t),
$$
which solves the equation
\begin{equation}
	\partial_t \widehat{w} - \dvg \widehat{\bl{A}} (x, t, \widehat{w}, D\widehat{w} ) = 0
	\quad \text{weakly in }
	\widehat{Q} :=
	K_{A\rho} \times (- (\boldsymbol \mu^-)^{p-q-1} \theta \rho^p, 0].
	\label{eq:pLaplace equation}
\end{equation}
Here, the vector field $\widehat{\bl{A}}$ given by
\begin{equation*}
	\widehat{\bl{A}}(x, t, v, \zeta)
	:=
	(\boldsymbol \mu^-)^{q+1-p} \widetilde{\bl{A}} (x, (\boldsymbol \mu^-)^{q+1-p} t, v, \zeta)
\end{equation*}
satisfies the desired structure conditions
\begin{equation}
	\left\{
	\begin{array}{c}
		\widehat{\bl{A}}(x,t,y,\z) \cdot \z \ge \widetilde{C}_o |\z|^p \\[5pt]
		|\widehat{\bl{A}}(x,t,y,\z)| \le \widetilde{C}_1 |\z|^{p-1}
	\end{array}
	\right.
	\qquad \mbox{ for a.e.~$(x,t)\in \widehat{Q}$, $\forall\, y\in\rr$, $\forall\, \z\in\rn$.}
	\label{eq:pLaplace structure}
\end{equation}
Thus, $\widehat{w}$ is a weak solution of an equation of parabolic $p$-Laplace type with $1<p<2$.
Consequently, the following well-known oscillation decay estimate first proved in \cite{ChenDB92} is available.
While the original proof uses a different scaling, we adapt the statement as indicated in \cite[Chapter~IV, Remark~2.2]{DB}, such that it fits our geometry.
\begin{proposition}\label{Prop:5:1}
Let $1 < p < 2$.
Suppose $\widehat{w}$ is a bounded, local, weak solution to 
\eqref{eq:pLaplace equation} -- \eqref{eq:pLaplace structure} in $\widehat{Q}$.
If for some constants $\sig$ in $(0,1)$ and $\boldsymbol{\widehat{\om}}>0$, there holds
\begin{equation}\label{Eq:5:8}
	\essosc_{Q_{\sig  \varrho}(\widehat\theta)} \widehat{w}
	\le
	\boldsymbol{\widehat{\om}}
	\quad
	\text{where }
	\widehat\theta = \boldsymbol{\widehat{\om}}^{2-p}
	\text{ and }
	Q_{\sig  \varrho}(\widehat\theta )\subset \widehat{Q},
\end{equation}
then, there exist constants $\beta_1$ in $(0,1)$ and $\boldsymbol\gm>1$ depending only on the data
$\{N,p,\widetilde C_o, \widetilde C_1\}$, $A$ and $\sig$, such that for all $0<r< \rho$, we have
\[
	\essosc_{Q_r(\widehat\theta)} \widehat{w}
	\le
	\boldsymbol\gm \boldsymbol{\widehat{\om}} \left(\frac{r}{\rho}\right)^{\be_1}.
\]
\end{proposition}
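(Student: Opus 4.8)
The plan is to appeal to the classical reduction-of-oscillation argument for the singular parabolic $p$-Laplacian of Chen--DiBenedetto; see \cite{ChenDB92}, \cite[Chapter~IV]{DB} and \cite[Proposition~5.1, Chapter~5]{DBGV-mono}. Since $\widehat{w}$ is a bounded weak solution of \eqref{eq:pLaplace equation}--\eqref{eq:pLaplace structure}, a genuine parabolic equation of $p$-Laplace type with $1<p<2$, there is no singularity at the level $\{\widehat{w}=0\}$ and hence no near-zero/away-from-zero dichotomy; the proof will consist solely in iterating one reduction-of-oscillation step and interpolating. The only analytic input is Lemmas~\ref{Lm:DG:1}, \ref{Lm:3:1}, \ref{Lm:3:2} and \ref{Lm:expansion:p}, all of which hold for $q=1$ (for Lemma~\ref{Lm:3:2} one uses $q>p-1$, valid here because $1<p<2$); with $q=1$ one has $q+1-p=2-p$, so the intrinsic time scaling in those lemmas matches $\widehat\theta=\boldsymbol{\widehat{\om}}^{\,2-p}$. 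The geometry $Q_r(\widehat\theta)$ of the statement, rather than the one originally used in \cite{ChenDB92}, is accommodated exactly as in \cite[Chapter~IV, Remark~2.2]{DB}.

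\emph{The single step.} I would place the vertex at the origin, set $\widehat\mu^+:=\essup_{Q_{\sig\rho}(\widehat\theta)}\widehat{w}$, $\widehat\mu^-:=\essinf_{Q_{\sig\rho}(\widehat\theta)}\widehat{w}$, so that $\widehat\mu^+-\widehat\mu^-\le\boldsymbol{\widehat{\om}}$, and distinguish cases. If $\widehat\mu^+-\widehat\mu^-\le\tfrac12\boldsymbol{\widehat{\om}}$ there is nothing to do; otherwise I would look at the two complementary measure alternatives for the set $\{\widehat{w}>\widehat\mu^+-\tfrac14\boldsymbol{\widehat{\om}}\}$ within $Q_{\sig\rho}(\widehat\theta)$. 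If that set occupies at most half of $Q_{\sig\rho}(\widehat\theta)$, a Fubini argument supplies a time level carrying good measure information for $\widehat\mu^+-\widehat{w}$ over a cube; propagating it forward by Lemma~\ref{Lm:3:1}, shrinking the set where $\widehat{w}$ stays close to $\widehat\mu^+$ by Lemma~\ref{Lm:3:2}, and then invoking Lemma~\ref{Lm:DG:1} (all with $q=1$; the side alternatives involving $|\boldsymbol\mu^\pm|$ come from the weight $|u|^{q-1}$ and are vacuous when $q=1$) would give $\widehat{w}\le\widehat\mu^+-2^{-k}\boldsymbol{\widehat{\om}}$ a.e.\ on a smaller intrinsic cylinder, for some $k$ depending only on the data. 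If instead that set occupies more than half of $Q_{\sig\rho}(\widehat\theta)$, then $\widehat\mu^+-\tfrac14\boldsymbol{\widehat{\om}}\ge\widehat\mu^-+\tfrac14\boldsymbol{\widehat{\om}}$, so one obtains a time level at which $\{\widehat{w}(\cdot,t_*)-\widehat\mu^-\ge\tfrac14\boldsymbol{\widehat{\om}}\}$ fills a fixed fraction of a cube, and the expansion of positivity, Lemma~\ref{Lm:expansion:p} with $q=1$, would give $\widehat{w}-\widehat\mu^-\ge\eta\boldsymbol{\widehat{\om}}$ a.e.\ on a full expanded intrinsic cylinder. In both cases one arrives at $\lambda\in(0,1)$ and $\sig_1\in(0,\sig)$, depending only on the data and $\sig$, with
\[
\essosc_{Q_{\sig_1\rho}(\widehat\theta_1)}\widehat{w}\le\lambda\boldsymbol{\widehat{\om}}=:\boldsymbol{\widehat{\om}}_1,\qquad \widehat\theta_1:=\boldsymbol{\widehat{\om}}_1^{\,2-p}<\widehat\theta.
\]
The inclusion $Q_{\sig_1\rho}(\widehat\theta_1)\subset Q_{\sig\rho}(\widehat\theta)$ needed to iterate is then automatic, precisely because $\widehat\theta_1<\widehat\theta$.

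\emph{Iteration and interpolation.} Iterating the single step on the successive intrinsic cylinders would produce $\rho_n=c_0^{\,n}\rho$, $\boldsymbol{\widehat{\om}}_n=\lambda^n\boldsymbol{\widehat{\om}}$, $\widehat\theta_n=\boldsymbol{\widehat{\om}}_n^{\,2-p}$ with $\essosc_{Q_{\rho_n}(\widehat\theta_n)}\widehat{w}\le\boldsymbol{\widehat{\om}}_n$ for every $n\in\nn$, where $c_0\in(0,1)$ depends only on the data and $\sig$; as $(\widehat\theta_n)$ is decreasing and $\rho_n\le\sig\rho$, every such cylinder sits inside $Q_{\sig\rho}(\widehat\theta)\subset\widehat{Q}$. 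For $0<r<\rho$, and since $\widehat\theta=\widehat\theta_0$ is the largest of the $\widehat\theta_n$, I would choose the largest $n$ with $Q_r(\widehat\theta)\subset Q_{\rho_n}(\widehat\theta_n)$; comparing time extents this amounts to $r\le\lambda^{n(2-p)/p}\rho_n$, which gives $\essosc_{Q_r(\widehat\theta)}\widehat{w}\le\boldsymbol{\widehat{\om}}_n=\lambda^n\boldsymbol{\widehat{\om}}$, while the maximality of $n$ makes $r$ comparable to $\lambda^{(n+1)(2-p)/p}\rho_{n+1}$, and the usual logarithmic bookkeeping then converts $\lambda^n$ into $\boldsymbol\gm\,(r/\rho)^{\beta_1}$ with $\beta_1\in(0,1)$ and $\boldsymbol\gm>1$ depending only on the data and $\sig$. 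The main obstacle will not be any single application of a lemma but this bookkeeping of nested intrinsic cylinders: at each stage one must verify that the cylinder on which Lemmas~\ref{Lm:DG:1}--\ref{Lm:expansion:p} are used meets the exact intrinsic relation those lemmas demand --- the levels $\widehat\mu^\pm\mp 2^{-j}\boldsymbol{\widehat{\om}}$ involve different values of $\xi$, hence different time dilations $(\xi\boldsymbol{\widehat{\om}})^{2-p}$, which have to be reconciled --- and that every cylinder so produced remains inside $\widehat{Q}$.
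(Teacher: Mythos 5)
The paper gives no proof of Proposition~\ref{Prop:5:1}: it simply cites \cite{ChenDB92} together with the scaling adaptation indicated in \cite[Chapter~IV, Remark~2.2]{DB} and proceeds. Your sketch is a correct reconstruction of the Chen--DiBenedetto reduction-of-oscillation argument to which those citations refer, and your observation that Lemmas~\ref{Lm:DG:1}--\ref{Lm:expansion:p} specialize with $q=1$ (where $q+1-p=2-p$ and the signed-power degeneracy, hence the side alternatives on $|\boldsymbol\mu^\pm|$, drop out of the arguments) is a sound way to see that the needed machinery is available, so you are following the same route the paper intends.
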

In order to find $\sigma \in (0,1)$ and $\boldsymbol{\widehat{\om}} > 0$ such that \eqref{Eq:5:8} is fulfilled, by the mean value theorem, \eqref{Eq:6:9} and \eqref{Eq:6:10} we first compute
\begin{align*}
	\essosc_{\widehat{Q}} \widehat{w}
	&=
	\essosc_Q w
	\leq
	q\max \big\{|\essup_Q v|^{q-1}, |\essinf_Q v|^{q-1} \big\} \essosc_Q v \\
	&\leq
	q \max \Big\{ 1, \Big( \frac{1+\xi}{\xi} \Big)^{q-1} \Big\} \frac{\boldsymbol \omega}{\boldsymbol \mu^-}
	\leq
	q \max \Big\{ 1, \Big( \frac{1+\xi}{\xi} \Big)^{q-1} \Big\} \frac{1}{\xi}
	=:
	\widehat{\boldsymbol\omega},
\end{align*}
where $\widehat{\boldsymbol \omega}$ depends only on the data.
Next, we have that
$Q_{\sigma \rho}(\widehat\theta) \subset \widehat{Q}$
if
$$
	\sigma^p
	\leq
	\widehat{\boldsymbol \omega}^{p-2}
	\Big( \frac{\boldsymbol \omega}{4 \boldsymbol \mu^-} \Big)^{q+1-p}.
$$
By \eqref{Eq:6:9} and  $q+1>p$, the preceding inequality is satisfied if
$$
	\sigma^p
	\leq
	  \widehat{\boldsymbol \omega}^{p-2}
	\Big( \frac{1-\eta}{4 (1+\xi)} \Big)^{q+1-p}.
$$
Thus, we have chosen $\sigma$ and $\boldsymbol{\widehat{\om}} $ to verify \eqref{Eq:5:8}.
Consequently, Proposition \ref{Prop:5:1} gives that
$$
	\essosc_{Q_r(\widehat{\boldsymbol \omega}^{2-p})} \widehat{w}
	\leq
	\boldsymbol \gamma \widehat{\boldsymbol \omega} \Big( \frac{r}{\rho} \Big)^{\beta_1}
$$
for any $0 < r < \rho$, with the indicated constants $\boldsymbol \gamma > 0$ and $\beta_1 \in (0,1)$.
By definition of $\widehat{w}$, this means that
$$
	\essosc_{Q_r(\widehat{\boldsymbol \omega}^{2-p} (\boldsymbol \mu^-)^{q+1-p})} w
	\leq
	\boldsymbol \gamma \Big( \frac{r}{ \rho} \Big)^{\beta_1}
$$
for any $0 < r <  \rho$.
Setting $c := \widehat{\boldsymbol \omega}^{2-p} \xi^{q+1-p}$ and using \eqref{Eq:6:9}, we have that 
$$
Q_r(c \boldsymbol \omega^{q+1-p}) \subset Q_r(\widehat{\boldsymbol \omega}^{2-p} (\boldsymbol \mu^-)^{q+1-p}).
$$
Moreover, the mean value theorem, \eqref{Eq:5:8} and the preceding estimate yield
\begin{align*}
	\essosc_{Q_r(c \boldsymbol \omega^{q+1-p})} v
	&\leq
	\tfrac1q
	\max \Big\{ \big|\essup_{Q_r(c \boldsymbol \omega^{q+1-p})} w\big|^\frac{1-q}{q}, \big|\essinf_{Q_r(c \boldsymbol \omega^{q+1-p})} w \big|^\frac{1-q}{q} \Big\}
	\essosc_{Q_r(c \boldsymbol \omega^{q+1-p})} w \\
	&\leq
	\boldsymbol\gamma
	\max \Big\{ 1, \Big( \frac{1+\xi}{\xi} \Big)^{1-q} \Big\}
	\Big( \frac{r}{ \rho} \Big)^{\beta_1},
\end{align*}
for any $0 < r <  \rho$.
Reverting to $u$ and the index $j$ and using \eqref{Eq:6:9}, this shows that
\begin{equation}\label{Eq:case2}
	\essosc_{Q_r(c \boldsymbol{\omega}_j^{q+1-p})} u
	\leq
	\boldsymbol{\gamma} \boldsymbol{\mu^-}_j \Big( \frac{r}{ \rho_j} \Big)^{\beta_1}
	\leq
	\boldsymbol{\gamma} \boldsymbol{\omega}_j \Big( \frac{r}{ \rho_j} \Big)^{\beta_1}
\end{equation}
for all $0 < r <  \rho_j$.

\subsection{The final argument}
Our next goal is to combine the reduction of oscillation \eqref{Eq:case1} and \eqref{Eq:case2}, and to remove the qualitative knowledge on the index $j$ in the final oscillation estimate.
To this end, we first observe that according to \eqref{Eq:case2},
\begin{equation}
	\essosc_{Q_{\widetilde{\rho}_i}(c \boldsymbol{\omega}_j^{q+1-p})} u
	\leq
	\boldsymbol{\gamma} (1-\eta)^i \boldsymbol{\omega}_j
	\equiv
	\boldsymbol{\gamma} \boldsymbol{\omega}_{j+i}
	\label{eq:start conclusion}
\end{equation}
for $\widetilde{\rho}_i := (1-\eta)^\frac{i}{\beta_1} \rho_j$, $i \in \nn$.
Further, for $i \in \nn \cup \{0\}$ we set
$$
\widehat{\rho}_i := \min\big\{ \lambda, (1-\eta)^\frac{1}{\beta_1} \big\}^i \rho \quad \text{ and } \quad \widehat{c} := \min\big\{  4^{p-q-1}, c \big\}.
$$
As a result, one may verify the set inclusions: 
\begin{equation*}
	\left\{
	\begin{aligned}
		&Q_{\widehat{\rho}_i}(\widehat{c} \boldsymbol{\omega}_i^{q+1-p}) \subset Q_i  &\text{ for } i = 1, \cdots, j, \\ 
		&Q_{\widehat{\rho}_i}(\widehat{c} \boldsymbol{\omega}_i^{q+1-p}) \subset Q_{\widetilde{\rho}_{i-j}}(c \boldsymbol{\omega}_j^{q+1-p})  &\text{ for } i = j+1, \cdots
	\end{aligned}
	\right.
\end{equation*}
This allows us to combine \eqref{Eq:case1} and \eqref{eq:start conclusion} and  to obtain that
\begin{equation*}
	\essosc_{Q_{\widehat{\rho}_i}(\widehat{c} \boldsymbol{\omega}_i^{q+1-p})} u
	\leq
	\boldsymbol{\gamma} \boldsymbol{\omega}_i
	\qquad \forall \, i \in \nn.
\end{equation*}
Now, consider $r \in (0, \rho)$.
Observe that there must exist $i \in \nn \cup \{0\}$ satisfying
\begin{equation}
	\boldsymbol{\omega}_{i+1}^{q+1-p} \widehat{\rho}_{i+1}^p
	<
	\boldsymbol{\omega}^{q+1-p} r^p
	\leq
	\boldsymbol{\omega}_i^{q+1-p} \widehat{\rho}_i^p.
	\label{eq:r squeezed}
\end{equation}
By \eqref{eq:r squeezed}$_2$ (which in particular implies $r \leq \widehat{\rho}_i$),
we find that
\begin{equation}
	\essosc_{Q_r(\widehat{c} \boldsymbol{\omega}^{q+1-p})} u
	\leq
	\essosc_{Q_{\widehat{\rho}_i}(\widehat{c} \boldsymbol{\omega}_i^{q+1-p})} u
	\leq
	\boldsymbol{\gamma} \boldsymbol{\omega}_i
	=
	\boldsymbol{\gamma} (1-\eta)^i \boldsymbol{\omega}.
	\label{eq:independent cylinder}
\end{equation}
Moreover, by \eqref{eq:r squeezed}$_1$ we obtain that
\begin{align*}
	\boldsymbol{\omega}^{q+1-p} r^p
	>
	\boldsymbol{\omega}_{i+1}^{q+1-p} \widehat{\rho}_{i+1}^p
	&=
	\Big( (1-\eta)^{q+1-p} \min\Big\{ \lambda, (1-\eta)^\frac{1}{\beta_1} \Big\} \Big)^{i+1}
	\boldsymbol{\omega}^{q+1-p} \rho^p\\
	&=:\bar{c}^{i+1}\boldsymbol{\omega}^{q+1-p} \rho^p,
\end{align*}
from which we deduce that
\[
\Big( \frac{r}{ \rho} \Big)^p>\bar{c}^{i+1}.
\]
Thus, joining this inequality with the right-hand side of \eqref{eq:independent cylinder},
we conclude that
$$
	\essosc_{Q_r(\widehat{c} \boldsymbol{\omega}^{q+1-p})} u
	\leq
	\boldsymbol{\gamma} \boldsymbol{\omega} \Big( \frac{r}{ \rho} \Big)^{\beta},\quad\text{ with }\be:=\frac{p\ln(1-\eta)}{\ln\bar{c}},
$$
for any $0 < r <  \rho$.
Finally, we complete the proof of Theorem \ref{Thm:1:1} by means of a standard covering argument.

The assumption that $\boldsymbol{\omega}\le1$ is not restrictive. For otherwise, one could work with
the rescaled function
\[
v(x,t):=(2M)^{-1} u\big( x,(2M)^{q+1-p}t \big)\quad\text{ where } M=\|u\|_{\infty, E_T},
\]
which verifies the same type of equations \eqref{Eq:1:1} -- \eqref{Eq:1:2p}.


\end{document}